\title{HOID: Higher Order Interpolatory Decomposition for tensors based on Tucker representation}
\author{Arvind K. Saibaba\thanks{Department of Mathematics, North Carolina State University $\text{asaibab}@\text{ncsu.edu}$}}
\newcommand{\C}{\mathbb{C}}
\newcommand{\bigO}{\mathcal{O}}
\newcommand{\mode}[1]{\times_{#1}}
\newcommand{\ten}[1]{\mathbf{\mathcal{#1}}}
\newcommand{\define}{\equiv} %\stackrel{\text{def}}{=}}
\newcommand{\normtwo}[1]{\| #1\|_2}
\newcommand{\normf}[1]{\| #1\|_{F}}
\newcommand{\norm}[2]{\| #1\|_{#2}}
\newcommand{\mat}[1]{\mathbf{#1}}
\renewcommand{\vec}[1]{\mathbf{#1}}
\newcommand{\argmin}{\arg\min}
\newtheorem{remark}{Remark}
\begin{document}

\maketitle
\begin{abstract}We derive a CUR-type factorization for tensors in the Tucker format based on interpolatory decomposition, which we will denote as Higher Order Interpolatory Decomposition (HOID). Given a tensor $\ten{X}$, the algorithm provides a set of column vectors $\{ \mat{C}_n\}_{n=1}^d$ which are columns extracted from the mode-$n$ tensor unfolding, along with a core tensor $\ten{G}$ and together, they satisfy some error bounds. Compared to the Higher Order SVD (HOSVD) algorithm, the HOID provides a decomposition that preserves certain important features of the original tensor such as sparsity, non-negativity, integer values, etc. Error bounds along with detailed estimates of computational costs are provided. The algorithms proposed in this paper have been validated against carefully chosen numerical examples which highlight the favorable properties of the algorithms. Related methods for subset selection proposed for matrix CUR decomposition, such as Discrete Empirical Interpolation method (DEIM) and leverage score sampling, have also been extended to tensors and are compared against our proposed algorithms. 
\end{abstract}
%\keywords{Tensors, Tucker factorization, Interpolatory decomposition, CUR factorization}

\section{Introduction and motivation}
Tensors, defined to be multidimensional arrays, are extremely useful in many applications ranging from neuroscience, facial recognition, and uncertainty quantification. An excellent review of the properties and applications of tensors is available in~\cite{kolda2009tensor}. A first-order tensor is a vector, a second-order tensor is a matrix, and tensors of order three or higher are called higher-order tensors. This paper presents a new CUR-type tensor decomposition for higher-order tensors based upon the interpolatory decomposition (ID) for matrices. 

Recently, a CUR decomposition has been proposed for matrices such that for a given matrix $\mat{A} \in \mathbb{C}^{m\times n}$
\[ \mat{A} = \mat{CUR} + \mat{E} \qquad \norm{\mat{E}}{} \ll \norm{\mat{A}}{}, \]
where $\mat{C} \in \mathbb{C}^{m\times k}$ is formed from columns of $\mat{A}$, $\mat{R} \in \mathbb{C}^{k\times n}$ consists of rows extracted from $\mat{A}$. The matrix $\mat{U}$ can be computed as $\mat{U} \define \mat{C}^\dagger\mat{A}\mat{R}^\dagger$ (where ${}^\dagger$ refers to the Moore-Penrose pseudo-inverse); however there are other possible choices for $\mat{U}$ as well. It is widely known that  the best rank-$k$ approximation to the matrix (in the spectral and Frobenius norm) is obtained by retaining only the top $k$ singular values and singular vectors. However, the advantage of the CUR decomposition is that the matrices $\mat{C}$ and $\mat{R}$ are representative of the matrix $\mat{A}$ itself, i.e., they preserve sparsity, non-negativity, integer values, etc.

A CUR-type decomposition for  tensor valued data was proposed by~\cite{drineas2007randomized}, in which a given tensor $\ten{X} \in \mathbb{C}^{I_1 \times \dots \times I_d}$ is approximated by a rank-$(r_1,\dots,r_d)$ tensor, obtained as the product of a core tensor  $\ten{G}\in \mathbb{C}^{r_1\times \dots \times r_d}$, and a list of matrices $\mat{C}_n \in \mathbb{C}^{I_n \times r_n}$ for $n=1,\dots d$ which are columns extracted from the mode-$n$ unfolding. The necessary background for tensors is reviewed in Section~\ref{sec:ten}. In~\cite{drineas2007randomized}, the column vectors, collected in the matrices $\mat{C}_n$, were obtained by sampling from the mode-$n$ unfolding, depending on a probability distribution based on the column norms. Our approach is different and extends the interpolatory decomposition, previously developed for matrices, to produce a  CUR-type decomposition based on Tucker format. We review previous work and highlight our main contributions here.

 \textbf{Related Work:} Previous work has considered developing matrix CUR decompositions by sampling the columns and rows of the matrix corresponding to a probability  distribution~\cite{drineas2006fast,bodor2012rcur,wang2013improving,mahoney2009cur,boutsidis2014optimal,drineas2008relative}.  Earlier methods used a sampling strategy based on a probability distribution that uses the column norms~\cite{drineas2006fast};  samples from this probability distribution were then used to extract optimal columns $\mat{C}$ and a similar approach was used to extract rows $\mat{R}$.  Drineas and Mahoney~\cite{drineas2007randomized}  applied this result to tensors  to produce low-rank representations based on the Tucker format. A related $(2+1)$ decomposition was also proposed in~\cite{mahoney2008tensor}. Sampling based on column norm sampling have been largely superseded by other sampling methods, such as those based on leverage score sampling; since they provide better relative error bounds~\cite{mahoney2009cur,boutsidis2014optimal}. In the leverage score approach, a singular value decomposition is computed, either exactly or approximately, and the leverage scores (defined as the squared two norm of the row of singular vector matrix) are used to define a probability distribution which is then used for sampling appropriate columns and rows from $\mat{A}$. To the best of our knowledge, leverage score sampling has not yet been applied to tensors.

More recently, two new methods for computing matrix CUR emerged which is relevant to our work; Sorensen and Embree~\cite{sorensen2014deim} used the Discrete Empirical Interpolation Method (DEIM)  previously developed in the context of model reduction, while Voronin and Martinsson~\cite{voronin2014cur} employed a two-sided interpolatory decomposition. Our work proposes a new algorithm based on strong rank-revealing QR but also provides an extension of the leverage score, DEIM and ID approach to tensors (see Appendices~\ref{sec:deim} and~\ref{sec:lev}).

 We would like to mention other relevant approaches here. Cross-approximation and pseudo-skeleton algorithms, which are based on a greedy approach at subset selection, have been developed to produce decompositions with interpolatory properties for the $\mathcal{H}$-Tucker format~\cite{espig2009black} and the Tensor train format~\cite{oseledets2010tt}. A review of various low-rank techniques for higher dimensional tensors is provided in~\cite{grasedyck2013literature}. A  different tensor interpolatory decomposition was proposed in~\cite{biagioni2015randomized} based on the CANDECOMP/PARAFAC  decomposition (see, for example~\cite{kolda2009tensor}); furthermore, the properties of the interpolatory decomposition proposed here are quite different compared to that paper and therefore, we will not discuss it further. Yet another CUR-type decomposition for tensors was proposed by Friedland et al~\cite{friedland2011fast}; however, it is not based on the Interpolatory Decomposition (ID).

\textbf{Contributions:} Our contributions are three fold: 1) We provide a new interpolatory decomposition for tensors in the Tucker format, of the form in~\eqref{eqn:hoid}, based on strong  rank revealing QR factorization (RRQR) applied to each mode unfolding; 2) given an approximate low-rank factorization  of the tensor we show how to compute an equivalent HOID representation; 3)  we provide a sequentially truncated HOID algorithm which is cheaper to implement than the standard HOID (in point 1), and extends the approach of~\cite{vannieuwenhoven2012new}. A detailed analysis of computational cost and the approximation error is presented. As was previously mentioned, we also provide an extension of the leverage score, DEIM and ID approach to tensors (see Appendices~\ref{sec:deim} and~\ref{sec:lev}). The theoretical bounds suggest that  the methods using strong RRQR is much better; however numerical examples will demonstrate that the error incurred using our algorithms is comparable to the methods based on DEIM and better than the leverage score approach. Our analysis is also relevant for the matrix CUR decomposition, which is a special case of the tensor algorithm that we  propose. The resulting codes are implemented in Python  using \verb|scipy.linalg.interpolative| and \verb|scikit.tensor| packages and are available on github~\url{https://github.com/arvindks/tensorcur}.

Although our main contribution is the use of strong RRQR to obtain a  tensor HOID, we improve the results of~\cite{drineas2007randomized} in several ways: 1) We provide a tighter bound than the analysis of~\cite[Theorem 2]{drineas2007randomized} for sampling based on column norms. This is because we use a sharper result of the error in Frobenius norm using the properties of orthogonal projectors (see Lemma~\ref{lemma:proj}), rather than using the triangle inequality~\cite{drineas2007randomized}. 2) We use better sampling strategies for choosing optimal columns of the mode-n unfolding $\mat{X}_{(n)}$ based on leverage scores computed from the right singular vectors of $\mat{X}_{(n)}$. The sampling using leverage scores allows us to obtain relative error estimates, which are better compared to~\cite{drineas2007randomized}\footnote{The authors in~\cite{drineas2007randomized} also note that since the time of initial submission, significant advances have been made that obtain relative error guarantees, as opposed to additive guarantees.}. These contributions have been discussed in Appendix~\ref{sec:lev}.

We conclude the introduction with a summary of this paper. In Section~\ref{sec:math}, we recall the relevant mathematical preliminaries by providing a quick background on tensors and interpolatory decompositions based on rank-revealing factorizations. Next, in Section~\ref{sec:hoid} we present our algorithms for computing a HOID based on Tucker factorization using the matrix interpolatory decomposition. We show how to derive this using both the explicit elements of the tensor, and ways to convert an already available low-rank representation. Error bounds and computational costs are also presented in detail. Finally, numerical experiments validating the bounds are presented in Section~\ref{sec:res} which demonstrates the favorable properties of the algorithms proposed.

\section{Mathematical preliminaries}\label{sec:math}
\subsection{Background on tensors}\label{sec:ten}
Here we review the basic notations and concepts involving tensors which will be useful in our discussions. A more detailed discussion of the properties and applications can be found in~\cite{kolda2009tensor}. A tensor is a $d$-dimensional array of numbers denoted by script notation $\ten{X} \in \mathbb{C}^{I_1 \times \dots \times I_d}$ with entries given by 
\[ x_{j_1,\dots,j_d} \qquad \forall \quad 1\leq j_1 \leq I_1, \> \dots, \>  1 \leq j_d \leq I_d.\]
We will denote by the matrix $\mat{X}_{(n)} \in \mathbb{C}^{I_n \times (\prod_{j\neq n} I_j)}$ the n-th mode unfolding of the tensor $\ten{X}$. Since there are $d$ dimensions, there are all together $d$-possibilities for unfolding called matricization. The $n-$mode multiplication of a tensor $\ten{X} \in \mathbb{C}^{I_1 \times \dots \times I_d}$ with a matrix $\mat{U} \in \mathbb{C}^{k\times I_n}$ results in a tensor $\ten{Y}$ of dimensions $\ten{Y} \in \mathbb{C}^{I_1 \times  \dots\times I_{n-1} \times k \times I_{n+1} \times \dots \times I_d}$ such that 
\[ \ten{Y}_{j_1,\dots,j_{n-1},j,j_{n+1} \dots,j_d}  = \left( \ten{X} \mode{n} \mat{U}\right)_{j_1,\dots,j_{n-1},j,j_{n+1} \dots,j_d} \> = \> \sum_{j_n=1}^{I_n}x_{j_1,\dots,j_d} u_{j,j_n}. \] 
Alternatively it can be expressed conveniently  in terms of matrix unfolding as 
\[ \ten{Y} = \ten{X}\mode{n}  \mat{U}  \quad \Leftrightarrow \quad \mat{Y}_{(n)} = \mat{U}\mat{X}_{(n)}.\]
Given the definitions of mode products and matricization of tensors, we can define the Higher Order SVD (HOSVD) algorithm for producing a rank $(r_1,\dots,r_d)$ approximation to the tensor based on the Tucker format. The HOSVD algorithm~\cite{de2000multilinear} returns a core tensor $\ten{G} \in \mathbb{C}^{r_1\times\dots\times r_d}$ and a set of unitary matrices $\mat{U}_j\in\mathbb{C}^{I_j\times r_j}$ for $j=1,\dots,d$ such that 
\begin{equation}\label{eqn:tucker}
\ten{X} \> \approx \> \ten{G} \mode{1} \mat{U}_1 \dots \mode{d} \mat{U}_d.
\end{equation}•
As mentioned earlier,~\eqref{eqn:tucker} is called the Tucker representation. However, a straightforward generalization to third and higher order tensors of the matrix Eckart-Young theorem~\cite{de2000best} is not possible; in fact, the best low-rank approximation is an ill-posed problem~\cite{desilva2008tensor}.  Another popular representation of tensors is called the CANDECOMP/PARAFAC decomposition (henceforth, called the CP decomposition) and is based on a sum of outer rank$-1$ products (similar to the SVD)
\begin{equation}\label{eqn:cp}
\mat{X} \> \approx \> \sum_{k=1}^r \lambda_k \vec{u}_1^{(k)} \circ \vec{u}_2^{(k)} \dots \circ \vec{u}_d^{(k)},
\end{equation}•
\noindent where the factors $\vec{u}_n \in \mathbb{R}^{I_n}$ for $n=1,\dots,d$, the factors $\lambda_n \in \mathbb{R}$ and $r$ is a positive integer. However, unlike the matrix SVD, $\vec{u}_n$ need not be orthogonal. 
\begin{algorithm}
\begin{algorithmic}[1]
\REQUIRE Tensor $\ten{X} \in \mathbb{C}^{I_1 \times \dots \times I_d}$ and desired rank $(r_1,\dots,r_d)$.
\FOR {n=1,\dots,d}
\STATE Compute $r_n$ left singular vectors $\mat{U}_n\in \mathbb{C}^{I_n\times r_n}$ of unfolding $\mat{X}_{(n)}$ .
\ENDFOR
\STATE Compute core tensor $\ten{G} \in \mathbb{C}^{r_1\times \dots \times r_d}$ as 
\[ \ten{G} \> \define \>  \ten{X} \mode{1} \mat{U}_1^* \mode{2} \dots \mode{d}\mat{U}_d^*.\]
\RETURN Tucker decomposition $[\ten{G}; \mat{U}_1,\dots,\mat{U}_d]$.
\end{algorithmic}
\caption{Higher order SVD, see for example~\cite{kolda2009tensor}.}
\label{alg:hosvd}
\end{algorithm}

We recall the following result stated and proved in~\cite[Theorem 5.1]{vannieuwenhoven2012new}, which will be useful for our subsequent analysis. 
\begin{lemma}\label{lemma:proj}
Let $\ten{X} \in \mathbb{C}^{I_1\times\dots \times I_j}$ be a tensor and let $\{ \mat{\Pi}_j \}_{j=1}^d$ be a list of $d$ orthogonal projection matrices, where $\mat{\Pi}_j \in \mathbb{C}^{I_j\times I_j}$ then
\[\normf{\ten{X} - \ten{X} \mode{1} \mat{\Pi}_1\dots \mode{d}\mat{\Pi}_d }^2  \> \leq  \>  \sum_{j=1}^d \normf{\ten{X} - \ten{X}  \mode{j}\mat{\Pi}_j }^2. \]
\end{lemma}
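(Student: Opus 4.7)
The plan is to prove the inequality by induction on the order $d$, with the key idea being a Pythagorean split in mode $d$ followed by the inductive hypothesis applied to the tensor with mode $d$ already projected.

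For the base case $d=1$, the statement reduces to an equality. For the inductive step, I would decompose the total residual as
\[
\ten{X} - \ten{X}\mode{1}\mat{\Pi}_1 \cdots \mode{d}\mat{\Pi}_d \;=\; \underbrace{(\ten{X} - \ten{X}\mode{d}\mat{\Pi}_d)}_{\ten{A}} \;+\; \underbrace{\bigl(\ten{X}\mode{d}\mat{\Pi}_d - \ten{X}\mode{1}\mat{\Pi}_1 \cdots \mode{d}\mat{\Pi}_d\bigr)}_{\ten{B}},
\]
using the fact that mode products in distinct modes commute so that the second piece can be rewritten as $\tilde{\ten{X}} - \tilde{\ten{X}}\mode{1}\mat{\Pi}_1 \cdots \mode{d-1}\mat{\Pi}_{d-1}$, where $\tilde{\ten{X}} := \ten{X}\mode{d}\mat{\Pi}_d$.

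The crucial step is showing $\langle \ten{A}, \ten{B}\rangle = 0$ in the Frobenius inner product. I would pass to the mode-$d$ unfolding: $\ten{A}$ unfolds as $(\mat{I}-\mat{\Pi}_d)\mat{X}_{(d)}$, while $\ten{B}$ unfolds as $\mat{\Pi}_d \mat{X}_{(d)} \mat{K}$ for some Kronecker-structured matrix $\mat{K}$ on the right (whose exact form is irrelevant). Since the Frobenius inner product of tensors equals that of any mode unfolding, and since $\mat{\Pi}_d^*(\mat{I}-\mat{\Pi}_d)=0$ for an orthogonal projector, the inner product vanishes. Therefore Pythagoras yields $\normf{\ten{A}+\ten{B}}^2 = \normf{\ten{A}}^2 + \normf{\ten{B}}^2$.

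Finally, I would apply the inductive hypothesis to $\tilde{\ten{X}}$ with the $d-1$ projectors $\mat{\Pi}_1,\ldots,\mat{\Pi}_{d-1}$, giving $\normf{\ten{B}}^2 \le \sum_{j=1}^{d-1}\normf{\tilde{\ten{X}} - \tilde{\ten{X}}\mode{j}\mat{\Pi}_j}^2$. Since $\tilde{\ten{X}} - \tilde{\ten{X}}\mode{j}\mat{\Pi}_j = (\ten{X} - \ten{X}\mode{j}\mat{\Pi}_j)\mode{d}\mat{\Pi}_d$ and the mode-$d$ multiplication by an orthogonal projector is non-expansive in Frobenius norm (its mode-$d$ unfolding left-multiplies by $\mat{\Pi}_d$, which has operator norm at most one), each such term is bounded by $\normf{\ten{X} - \ten{X}\mode{j}\mat{\Pi}_j}^2$. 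Combining with the mode-$d$ term $\normf{\ten{A}}^2$ closes the induction. The main obstacle is the orthogonality argument in the Pythagorean split — keeping track of which Kronecker factor acts on which side of $\mat{X}_{(d)}$ — but once that is set up, the non-expansiveness and the induction are routine.
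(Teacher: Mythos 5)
Your proof is correct and rests on exactly the ingredient the paper identifies as the crux, namely the single-mode Pythagorean identity~\eqref{eqn:fronorm} for orthogonal projectors (your mode-$d$ split $\langle \ten{A},\ten{B}\rangle=0$ and the non-expansiveness step are both instances of it), so it is essentially the same telescoping argument as the cited proof in~\cite{vannieuwenhoven2012new}, merely packaged as an induction on the number of projected modes. The only cosmetic point is that your inductive hypothesis must be stated for $d-1$ projectors acting on a tensor that still has $d$ modes (the unprojected mode $d$ of $\tilde{\ten{X}}$ is a spectator), so the induction should be on the number of projectors rather than on the tensor order; with that rephrasing the argument is complete.
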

The result relies on the orthogonality of the projector in the Frobenius norm~\cite{vannieuwenhoven2012new}, i.e., for any $n=1,\dots,d$ defining $\mat{\Pi}_n^\perp = \mat{I} - \mat{\Pi}_n$
\begin{equation}\label{eqn:fronorm}
 \normf{\ten{X}}^2  \>=  \>  \normf{\ten{X} \mode{n} \mat{\Pi}_n}^2 + \normf{\ten{X} \mode{n} \mat{\Pi}_n^\perp}^2.
\end{equation}•

\iffalse
\begin{proof}
Write the left hand side in a series of telescoping terms and using the triangle inequality
\begin{align*}
 \normf{\ten{X} - \ten{X} \mode{1} \mat{\Pi}_1\dots \mode{d}\mat{\Pi}_d } \quad \leq & \quad  \normf{\ten{X} - \ten{X} \mode{1} \mat{\Pi}_1 } +  \normf{\ten{X} \mode{1} \mat{\Pi}_1 - \ten{X} \mode{1} \mat{\Pi}_1 \mode{2}\mat{\Pi}_2 }   + \dots\\
 & \qquad  \normf{\ten{X} \mode{1} \mat{\Pi}_1\dots \mode{d-1}\mat{\Pi}_{d-1} - \ten{X} \mode{1} \mat{\Pi}_1\dots \mode{d}\mat{\Pi}_d } \\
\leq &  \quad  \normf{\ten{X} - \ten{X} \mode{1} \mat{\Pi}_1 } +  \normf{\ten{X}  - \ten{X}  \mode{2}\mat{\Pi}_2 }   + \dots\\
 & \qquad  \normf{\ten{X} \mode{1} \mat{\Pi}_1\dots \mode{d-1}\mat{\Pi}_{d-1} - \ten{X} \mode{1} \mat{\Pi}_1\dots \mode{d}\mat{\Pi}_d } \\
 & \qquad \vdots \\
\leq &  \quad \normf{\ten{X} - \ten{X} \mode{1} \mat{\Pi}_1 } +  \normf{\ten{X}  - \ten{X}  \mode{2}\mat{\Pi}_2 }   + \dots +  \normf{\ten{X}  - \ten{X}  \mode{d}\mat{\Pi}_d }
\end{align*}
where in each line we have used the property that the projection matrices applied to a matrix does not increase its Frobenius norm. 
\end{proof}
\fi

\subsection{Interpolatory decomposition}\label{sec:id}
An interpolatory decomposition for a matrix $\mat{A} \in \mathbb{C}^{m\times n}$ is the factorization 
\[ \mat{A} \approx \mat{C}  \mat{X},  \]
where $\mat{C}$ contains the columns of the matrix $\mat{A}$ indexed by an index set denoted by $c$ of size $k$, and $ \mat{X} $ is a matrix, a subset of whose columns make up the identity matrix $\mat{I}_k$, and has entries bounded by a tolerance parameter $f$, defined shortly. An interpolatory decomposition can be computed using rank-revealing QR factorizations such as pivoted QR or strong rank-revealing QR factorization, as we will now demonstrate. This was first proposed by Stewart~\cite{stewart1999four}. Consider a rank-revealing QR factorization for $\mat{A}$ 
\begin{equation}\label{eqn:pivotedqr}
\mat{A}\mat{\Pi} \> = \> \mat{QR},
\end{equation}•
where $\mat{\Pi} $ is a permutation matrix, $\mat{Q}\in \mathbb{C}^{m\times m}$ is unitary and $\mat{R}\in \mathbb{C}^{m\times n}$ is upper triangular. The factorization is computed by using column pivoting in combination with either Gram-Schmidt, Givens rotations or Householder reflections; see also Stewart~\cite{stewart1999four} for more details. 

Starting with the Pivoted QR factorization, we can decompose the permutation matrix $\mat{\Pi} = \begin{bmatrix} \mat{\Pi}_1 & \mat{\Pi}_2\end{bmatrix}$ where $\mat{\Pi}_1$ has $k$ columns and $\mat{\Pi}_2$ has $n-k$ columns. Similarly partition $\mat{Q} = \begin{bmatrix} \mat{Q}_1 & \mat{Q}_2\end{bmatrix}$ with $\mat{Q}_1$ and $\mat{Q}_2$ having $k$ and $m-k$ columns respectively. We can then write the pivoted QR factorization as 
\begin{equation}\label{eqn:pivotedqrpart}
\mat{A} \begin{bmatrix} \mat{\Pi}_1 & \mat{\Pi}_2\end{bmatrix} \> = \>\begin{bmatrix} \mat{Q}_1 & \mat{Q}_2\end{bmatrix} \begin{bmatrix}
\mat{R}_{11} & \mat{R}_{12} \\  & \mat{R}_{22} \end{bmatrix},
\end{equation}
where $\mat{R}_{11} \in \mathbb{C}^{k\times k}$, $\mat{R}_{12} \in \mathbb{C}^{k \times (n-k)}$ and $\mat{R}_{22} \in \mathbb{C}^{(m-k) \times (n-k)}$.

\begin{equation}
\mat{C} \define \mat{A}\mat{\Pi}_1 =  \mat{Q}_1 \mat{R}_{11} \qquad \mat{A\Pi}_2 = \mat{Q}_1 \mat{R}_{12} + \mat{Q}_2 \mat{R}_{22} \approx \mat{Q}_1 \mat{R}_{12}  .
\end{equation}•
 As long as $\normtwo{\mat{R}_{22}}$ is small we can approximate $ \mat{A\Pi}_2 \approx \mat{Q}_1\mat{R}_{12}$. Eliminating $\mat{Q}_1$ using the relation $\mat{Q}_1  = \mat{C}\mat{R}_{11}^{-1}$ (assuming $\mat{R}_{11}$ is invertible) we can rewrite the equations as 

\begin{equation}\label{mat_id}
\mat{A} \approx  \mat{C}\mat{F}^* \qquad  \mat{F}^* \define \begin{bmatrix} \mat{I} & \mat{R}_{11}^{-1} \mat{R}_{12} \end{bmatrix} \mat{\Pi}^*.
\end{equation}•
The decomposition written down in~\eqref{mat_id} is known as the interpolative decomposition. The interpolative decomposition approximates the matrix $\mat{A}$ using only a few of its columns, with the advantage that it preserves some important properties of the underlying matrix $\mat{A}$ such as sparsity and non-negativity. 

%Voronin and Martinsson~\cite{voronin2014cur} used a two-sided interpolatory decomposition  to produce a CUR decomposition
%\[ \mat{A} \>  \approx \>  \mat{CUR}, \]
% where $\mat{C} \in \mathbb{C}^{m\times c} $ are $c$ columns of $\mat{A}$, $\mat{R} \in \mathbb{C}^{r \times n}$ are $r$ rows of the matrix, and $\mat{U} = \mat{C}^\dagger \mat{A} \mat{R}^\dagger$ is the optimal projected matrix. 

Next we discuss the error in the interpolatory decomposition. It can be readily seen that 
\[ \mat{A}  =  \mat{C}\mat{F}^* + \mat{E} \qquad \mat{E}  \define \begin{bmatrix} \mat{0} & \mat{Q}_2\mat{R}_{22}\end{bmatrix}\mat{\Pi}^*.\]
Since the SVD produces the optimal rank-$k$ decomposition, it follows that  $\sigma_{k+1}(\mat{A}) \leq \normtwo{\mat{R}_{22}}$. Several rank-revealing QR factorizations have been developed that satisfy the property that $\normtwo{\mat{R}_{22} }\leq C \sigma_{k+1}(\mat{A})$, where $C$ is a constant independent of the singular values of $\mat{A}$. In particular, the Gu-Eisenstat algorithm~\cite{gu1996efficient} (with parameter $f \geq 1$) produces a QR factorization with the error bounds 
\begin{equation}
\label{eqn:rrqr}
\sigma_i(\mat{R}_{11})  \>\geq \>\frac{\sigma_i(\mat{A})}{\sqrt{1 + f^2k(n-k)} } \qquad 
 \sigma_{j} (\mat{R}_{22}) \>  \leq \>   \sqrt{1 + f^2k(n-k)} \cdot \sigma_{k+j}(\mat{A}), 
\end{equation}
for $i=1,\dots,k$ and $j = 1,\dots,n-k$. Furthermore
\begin{equation}
\label{eqn:rrqr2}
| \left(\mat{R}_{11}^{-1} \mat{R}_{12} \right)_{ij}  | \leq f .
\end{equation}
The algorithm based on Pivoted QR can consistently achieve the above error bound but has the possibility of failure in adversarial cases. One such example is the notorious Kahan matrix (see for example~\cite{gu1996efficient}).

 In this work, by using the strong RRQR factorization, we obtain sharp error bounds compared to pivoted QR factorization.  In practice, however, both the strong RRQR and pivoted QR are expensive to compute since they cost $\bigO(mn^2)$; therefore, one would ideally like to avoid computing a QR factorization of $\mat{A}$. Instead, the matrix $\mat{Y} = \mat{\Omega}\mat{A}$ is formed, where $\mat{\Omega}$ is a $(k+p) \times m$ matrix with i.i.d.\ entries sampled from standard normal distribution and $p$ is an oversampling factor.  A pivoted QR factorization is performed on $\mat{Y}$ and the first $k$ columns of the permutation matrix are used to extract columns from $\mat{X}_{(n)}$. The cost is only $\bigO(mnk + mk^2)$, which makes it more efficient and the accuracy does not decrease appreciably. This is confirmed by numerical experiments (see Section~\ref{sec:res}). Further details of the randomized algorithm is provided in~\cite{halko2011finding}. In our numerical experiments, we use an implementation that is publicly available in \verb|scipy.linalg.interpolative|.

\section{Higher Order Interpolatory decomposition }\label{sec:hoid}
\subsection{Algorithm and Error bounds}
We now present the algorithm for representing low-rank tensors $\ten{X}$ in terms of an interpolatory decomposition
\begin{equation}\label{eqn:hoid}
\ten{X} \> = \> \ten{G} \mode{1} \mat{C}_1\dots \mode{d}\mat{C}_d + \ten{E},
\end{equation}•
\noindent where $\ten{E}$ is the error in the representation and $\normf{\ten{E}}$ is small relative to $\normf{\ten{X}}$. The tensor $\ten{G}$ is the core tensor and the matrices $\mat{C}_n$ for $n=1,\dots,d$ are formed by extracting $r_n$ columns from the $n$-mode unfolding $\mat{X}_{(n)}$. Therefore, $\mat{C}_n$ contain entries from the original tensor $\ten{X}$ itself. The difference between the algorithm proposed here and the work~\cite{drineas2007randomized} is the choice of the columns $\mat{C}_n$. While the authors choose a randomized selection process to choose the columns, we use the interpolatory decomposition based on strong RRQR. The index sets of the selected columns are denoted by $\vec{p}_n$ and is obtained from the interpolatory decomposition on the unfolded matrix $\mat{X}_{(n)}$, as described in Section~\ref{sec:id} 
\[\mat{X}_{(n)}\> = \> \mat{C}_n\mat{F}_n^* + \mat{E}_n \qquad n = 1,\dots,d. \]
Given the columns $\{\mat{C}_n\}_{n=1}^d$, the core tensor is computed as 
\begin{equation}\label{eqn:core}
 \ten{G} \> =\>  \ten{X} \mode{1} \mat{C}_1^\dagger \dots \mode{d}\mat{C}_d^\dagger .
 \end{equation}
 This choice of core tensor is optimal in the Frobenius norm. To see this, consider the rank-constrained minimization problem
 \[\hat{\mat{U}} \> = \>  \argmin_{\mat{U} \in \mathcal{C}(p,q,k) } \normf{\mat{X}_{(1)}  -  \mat{C}_1 \mat{U} \left( \mat{C}_d\otimes \dots \otimes  \dots \mat{C}_2\right)^*},  \]
 where $\otimes$ represents the Kronecker product, the indices $p = r_1, q = \prod_{j > 1} r_j $  and $\mathcal{C}(p,q,k)$ is the space of all complex $p \times q$ matrices of rank $k$. Here the desired rank $k=\min\{p,q\}$. Assuming all the matrices $\{ \mat{C}_j\}_{j=1}^d$ are full rank, we can invoke the discussion in~\cite{stewart1999four}, alternatively see~\cite[Theorem 2.1]{friedland2007generalized}, to obtain the optimal solution as 
 \[ \hat{\mat{U}} = \mat{C}_1^\dagger\mat{X}_{(1)} \left( \mat{C}_d\otimes \dots \otimes   \mat{C}_2\right)^{*,\dagger}  = \mat{C}_1^\dagger\mat{X}_{(1)} \left( \mat{C}_d^\dagger \otimes \dots \otimes  \mat{C}_2^\dagger\right)^*, \]
where we have used the properties of Kronecker products. Reshaping this matrix $\mat{U}$ of size $r_1 \times \prod_{j > 1} r_j$ into a tensor of dimensions $(r_1,r_2,\dots,r_d)$ gives us the desired core tensor $\ten{G}$ as represented by~\eqref{eqn:core}. The procedure for computing the HOID has been summarized in Algorithm~\ref{alg:hoid}. In several applications, the additional step of computing the core tensor $\ten{G}$, by projecting the columns onto the original tensor $\ten{X}$, is not necessary and may be skipped. 

\begin{algorithm}
\begin{algorithmic}[1]
\REQUIRE Tensor $\ten{X} \in \mathbb{C}^{I_1 \times \dots \times I_d}$ and desired rank $(r_1,\dots,r_d)$. 
\FOR {n=1,\dots,d}
\STATE Compute an interpolatory decomposition of unfolding $\mat{X}_{(n)} \approx \mat{C}_n \mat{F}^*_n$ \\
	\COMMENT{where  $\mat{C}_n \in \mathbb{C}^{I_n \times r_n}$ are columns of $\mat{X}_{(n)}$}
\ENDFOR
\STATE Compute core tensor $\ten{G} \in \mathbb{C}^{r_1\times \dots \times r_d}$ as 
\[ \ten{G} \> \define \>  \ten{X} \mode{1} \mat{C}_1^\dagger \dots \mode{d}\mat{C}_d^\dagger.\]
\RETURN Tucker decomposition $[\ten{G}; \mat{C}_1,\dots,\mat{C}_d]$.
\end{algorithmic}
\caption{HOID: Higher order Interpolatory Decomposition }
\label{alg:hoid}
\end{algorithm}
\subsubsection{Error estimate and computational cost}
The following theorem quantifies the error in the truncated interpolatory tensor decomposition when computed using Algorithm~\ref{alg:hoid}. 
\begin{theorem}\label{thm:error1}
Let the matrices $\mat{C}_n$ for $n=1,\dots,d$ and the core tensor $\ten{G}$ be computed according to Algorithm~\ref{alg:hoid}. Then we have the following error bound
\[ \normf{\ten{E}}^2  = \normf{\ten{X} - \ten{G} \mode{1} \mat{C}_1\dots \mode{d}\mat{C}_d }^2 \>  \leq \>  \sum_{n=1}^d q_n\left(\sum_{k > r_n} \sigma_{k}^2(\mat{X}_{(n)}) \right),\]
\noindent where the factors $q_n \define \left(1 + f^2r_n(\prod_{k \neq n} I_k - r_n )\right)$ and $f \geq 1$ is the parameter chosen from strong RRQR algorithm~\cite{gu1996efficient}. 
\end{theorem}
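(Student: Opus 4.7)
The plan is to reduce the full-tensor error to the sum of mode-wise errors via Lemma~\ref{lemma:proj}, then bound each mode-wise error using the known strong RRQR estimate from Section~\ref{sec:id}.

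First I would rewrite the approximation in projector form. Because $n$-mode products in distinct modes commute,
\[
\ten{G}\mode{1}\mat{C}_1\dots\mode{d}\mat{C}_d \;=\; \ten{X}\mode{1}(\mat{C}_1\mat{C}_1^\dagger)\mode{2}\dots\mode{d}(\mat{C}_d\mat{C}_d^\dagger).
\]
Define $\mat{\Pi}_n \define \mat{C}_n\mat{C}_n^\dagger$; since $\mat{C}_n$ has full column rank (being a subset of $r_n$ columns selected by strong RRQR, with the invertibility of $\mat{R}_{11}$ guaranteed by~\eqref{eqn:rrqr}), $\mat{\Pi}_n$ is the orthogonal projector onto the column space of $\mat{C}_n$.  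Lemma~\ref{lemma:proj} then yields
\[
\normf{\ten{E}}^2 \;=\; \normf{\ten{X}-\ten{X}\mode{1}\mat{\Pi}_1\dots\mode{d}\mat{\Pi}_d}^2 \;\leq\; \sum_{n=1}^d \normf{\ten{X}-\ten{X}\mode{n}\mat{\Pi}_n}^2.
\]

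Next I would bound each summand by unfolding along mode $n$: using $\ten{X}\mode{n}\mat{\Pi}_n \Leftrightarrow \mat{\Pi}_n\mat{X}_{(n)}$ together with the isometry $\normf{\ten{Y}}=\normf{\mat{Y}_{(n)}}$, the $n$-th term equals $\normf{(\mat{I}-\mat{\Pi}_n)\mat{X}_{(n)}}_F^2$.  Since $\mat{\Pi}_n$ is the orthogonal projector onto the range of $\mat{C}_n$, it gives the best approximation in that column space in Frobenius norm, so
\[
\normf{(\mat{I}-\mat{\Pi}_n)\mat{X}_{(n)}}^2 \;\leq\; \normf{\mat{X}_{(n)}-\mat{C}_n\mat{F}_n^*}^2,
\]
for the particular $\mat{F}_n$ produced by the interpolatory decomposition of Section~\ref{sec:id}.

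Finally I would invoke the strong RRQR error representation.  With the partitioning~\eqref{eqn:pivotedqrpart} applied to $\mat{X}_{(n)}$, we have $\mat{X}_{(n)}-\mat{C}_n\mat{F}_n^* = \begin{bmatrix}\mat{0} & \mat{Q}_2^{(n)}\mat{R}_{22}^{(n)}\end{bmatrix}\mat{\Pi}^{(n),*}$, whose squared Frobenius norm equals $\sum_j \sigma_j^2(\mat{R}_{22}^{(n)})$.  The Gu--Eisenstat bound~\eqref{eqn:rrqr} gives, for each $j=1,\dots,\prod_{k\neq n}I_k - r_n$,
\[
\sigma_j^2(\mat{R}_{22}^{(n)}) \;\leq\; \bigl(1+f^2 r_n(\textstyle\prod_{k\neq n}I_k - r_n)\bigr)\,\sigma_{r_n+j}^2(\mat{X}_{(n)}) \;=\; q_n\,\sigma_{r_n+j}^2(\mat{X}_{(n)}).
\]
Summing over $j$ yields $\normf{\mat{X}_{(n)}-\mat{C}_n\mat{F}_n^*}^2 \leq q_n \sum_{k>r_n}\sigma_k^2(\mat{X}_{(n)})$, and substituting into the sum over $n$ gives the stated bound.

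The only delicate point is step one: verifying that the sequence of commuting orthogonal projections lets us write the HOID residual in the exact form required by Lemma~\ref{lemma:proj}, and confirming that $\mat{C}_n$ produced by strong RRQR is full-column-rank so that $\mat{C}_n\mat{C}_n^\dagger$ is genuinely an orthogonal projector.  The rest is a clean chaining of the single-mode RRQR bound with the additive tensor projection inequality.
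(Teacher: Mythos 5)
Your proposal is correct and follows essentially the same route as the paper: reduce to mode-wise errors via Lemma~\ref{lemma:proj}, annihilate the $\mat{C}_n\mat{F}_n^*$ part with the orthogonal projector $\mat{I}-\mat{C}_n\mat{C}_n^\dagger$, and bound $\normf{\mat{E}_n}^2$ by summing the squared singular values of $\mat{R}_{22}^{(n)}$ via the Gu--Eisenstat estimate~\eqref{eqn:rrqr}. Your phrasing of the middle step as a best-approximation property of the orthogonal projector is a cosmetic variant of the paper's direct substitution $\mat{X}_{(n)} = \mat{C}_n\mat{F}_n^* + \mat{E}_n$, and the rest matches.
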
•
\begin{proof}
First plug in the definition of the core tensor $\ten{G}$
  \begin{equation*}\label{thm1_inter} \ten{G} \mode{1} \mat{C}_1\dots \mode{d}\mat{C}_d = \ten{X} \mode{1} \mat{C}_1\mat{C}_1^\dagger \mode{2} \dots \mode{d} \mat{C}_d\mat{C}_d^\dagger, \end{equation*} and observe that $\mat{C}_n\mat{C}_n^\dagger$ is a projection matrix. We can then 
invoke the result from Lemma~\ref{lemma:proj} so that
 \begin{eqnarray}\nonumber
  \normf{\ten{X} - \ten{G} \mode{1} \mat{C}_1\dots \mode{d}\mat{C}_d }^2 \>  \leq & \> \sum_{n=1}^d  \normf{\ten{X} - \ten{X} \mode{n} \mat{C}_n \mat{C}_n^\dagger}^2 \\\label{thm1_inter2}
\> = &\> \sum_{n=1}^d \normf{(\mat{I}-\mat{C}_n\mat{C}_n^\dagger)\mat{X}_{(n)}}^2. 
\end{eqnarray}
To complete the proof we have  
\[  (\mat{I}-\mat{C}_n\mat{C}_n^\dagger)\mat{X}_{(n)} =  (\mat{I}-\mat{C}_n\mat{C}_n^\dagger)(\mat{C}_n\mat{F}_n^* + \mat{E}_n ) =   (\mat{I}-\mat{C}_n\mat{C}_n^\dagger) \mat{E}_n. \]
Plugging this back into~\eqref{thm1_inter2} and using the result in~\eqref{eqn:fronorm} that an orthogonal projection matrix applied to a matrix does not increase its Frobenius norm,  the right hand side of the inequality becomes $ \sum_{n=1}^d \normf{\mat{E}_n}^2$. Next, the Frobenius norm of the error can be bounded using the result in~\eqref{eqn:rrqr}; consequently we have the desired result.
\end{proof}
\begin{remark}\label{remark1} If the singular values $\sigma_{r_n+k} (\mat{X}_n)$ decay rapidly then the singular values can be discarded and the right hand side in Theorem~\ref{thm:error1} is approximately 
\[    \sum_{n=1}^d q_n \cdot \sigma_{r_n + 1}^2(\mat{X}_{(n)}). \]
If, on the other hand,  the singular values $\sigma_{r_n+k} (\mat{X}_n)$ decay very slowly, then Theorem~\ref{thm:error1} simplifies to 
\[  \normf{\ten{E}}^2 \> \leq \>  \sum_{n=1}^d q_n  \left(\min\{I_n,\prod_{k\neq n}I_k\} - r_n\right)\cdot \sigma_{r_n + 1}^2(\mat{X}_{(n)}). \]
\end{remark}
We now discuss the computational cost of Algorithm~\ref{alg:hoid}. As was mentioned earlier, the cost of computing strong RRQR factorization for an $m\times n$ matrix scales as $\bigO(mn^2)$. Instead we use a randomized approach for column subset selection which first multiplies each matrix unfolding as $\mat{Y}_k = \mat{\Omega}_k\mat{X}_{(k)}$ where $\mat{\Omega}_k$ is a $(I_k + p) \times \prod_{k\neq n} I_n$. The strong RRQR is then applied to the matrix $\mat{Y}_k$ instead of $\mat{X}_{(k)}$.  A complete error analysis of the randomization is out of the scope of this paper. Numerical results indicate that the method is comparable in accuracy to the full HOID. The cost of the resulting algorithm has been summarized in Table~\ref{tab:cost1}. 

\begin{table}[!ht] \centering
\begin{tabular}{|c|c|c|} \hline
•Step & Description & Cost \\ \hline 
1 & Randomized ID &  $ \bigO \left( \sum_{n=1}^d r_n \prod_{k=1}^n I_k  + r_n^2 I_n\right) $ \\ \hline
2 & Core Tensor & $\bigO\left( \sum_{n=1}^d\prod_{ j \leq n } r_j \prod_{k \geq n} I_k+ r_n^2 I_n \right) $  \\ \hline
\end{tabular}•
\caption{Computational cost of the Higher Order Interpolatory Decomposition, Algorithm~\ref{alg:hoid}.}
\label{tab:cost1}
\end{table}•
\iffalse
The connection of the HOID with the matrix CUR decomposition is readily established by noting the following identity
\[ \mat{A} = \mat{CUR} + \mat{E} \qquad \Leftrightarrow \qquad \mat{A} = \mat{U} \mode{1} \mat{C} \mode{2} \mat{R} + \mat{E},\]

in which we have used the properties of the mode product defined in Section~\ref{sec:ten}. Here $\mat{C}$ is a matrix which represents the columns of the matrix, while $\mat{R}$ contains sampled rows from the matrix. Given the columns $\mat{C}$ and rows $\mat{R}$, the projected matrix is typically computed as $\mat{U} \define \mat{C}^\dagger \mat{A} \mat{R}^\dagger$. The connection with the core tensor computation is established by again using the mode product properties 
\[ \mat{U}\quad  =  \quad  \mat{C}^\dagger \mat{A} \mat{R}^\dagger \qquad \Leftrightarrow \qquad \mat{U} = \mat{A} \mode{1} \mat{C}^\dagger\mode{2} \mat{R}^\dagger. \]
Here we make the choice that $\mat{U} =  \mat{C}^\dagger \mat{A} \mat{R}^\dagger$ since this choice of $\mat{U}$ is the minimizer of 
$ \normf{\mat{A} - \mat{CUR}}$ in the Frobenius norm~\cite[Theorem 2.1]{friedland2007generalized}. We emphasize again that because of this connection, several of the results discussed here are relevant to the matrix CUR decomposition as well. %We adopt the same choice of $\mat{U}$ for the tensor case as well since we are considering minimizing the error in the Frobenius norm; other choices of $\mat{U}$ are possible, however. 
\fi
\subsection{Converting an existing low-rank decomposition}
Several algorithms are available in the literature for approximate low-rank representation of tensors. Examples include HOSVD~\cite{de2000multilinear} (summarized in Algorithm~\ref{alg:hosvd}), Higher Order Orthogonal Iteration (HOOI) and Alternating Least Squares algorithm (ALS).  For details on these algorithms please refer to~\cite{kolda2009tensor}. The output of these algorithms are available either in Tucker or CP format. However, these low-rank representations all share one deficiency, namely, the low-rank representations do not provide an interpretation in terms of the entries of the tensor, and do not preserve sparsity, non-negativity, etc. 

We will now address the question of how to convert a low-rank representation, available in Tucker or CP format, to an equivalent Higher Order Interpolatory Decomposition. To be fairly general, assume that we have the following low-rank representation: The mode-$n$ unfolding of the tensor can be written as a low-rank approximation satisfies the following bound
\begin{equation}\label{eqn:approx}
\normf{\mat{X}_{(n)} - \mat{A}_n\mat{B}_n^*} \> \leq \> \varepsilon_n  \qquad n=1,\dots,d.
\end{equation}•
\subsubsection{Conversion into low-rank format}
 Here we provide two examples of how to treat low-rank factorizations obtained from other algorithms that are provided to us either in Tucker format or CP format. 
\begin{enumerate}
\item The Tucker decomposition written in short hand as $[\ten{G}; \mat{U}_1,\dots, \mat{U}_d]$ admits the matrix unfolding 

\begin{equation}
\mat{X}_{(n)} \approx \mat{U}_n  \mat{G}_{(n)} (\mat{U}_d\otimes \dots \otimes \mat{U}_{n+1} \otimes \mat{U}_{n-1} \otimes \dots \otimes \mat{U}_1 )^*,
\end{equation}•
where $ \mat{G}_{(n)}$ is the $n$-th matrix unfolding of $\ten{G}$. Several possible choices exist: we choose $\mat{A}_n = \mat{U}_{(n)}$ and $\mat{B}_{(n)} \define\mat{G}_{(n)}( \mat{U}_d\otimes \dots \otimes \mat{U}_{n+1} \otimes \mat{U}_{n-1} \otimes \dots \otimes \mat{U}_1 )^*$.
\item 
The CP decomposition is a summation of rank-1 outer products and can be expressed conveniently as
\begin{equation}
\ten{X} \approx [\mat{\Lambda}; \mat{Z}_1,\dots,\mat{Z}_d] = \sum_{r=1}^R \lambda_r \vec{z}_r^{(1)} \circ \dots \circ \vec{z}_r^{(d)}.
\end{equation}•
The mode-$n$ unfolding of the CP decomposition can be expressed in terms of the Khatri-Rao product
\begin{equation}
\mat{X}_{(n)} \approx \mat{Z}_n \mat{\Lambda} \left(\mat{Z}_d\odot \dots \odot \mat{Z}_{n+1} \odot \mat{Z}_{n-1} \odot \dots \odot \mat{Z}_1 \right)^*.
\end{equation}•
As before, several choices are possible for $\mat{A}_n$ and $\mat{B}_n$. We choose $\mat{A}_n =  \mat{Z}_n$ and $\mat{B}_n =   \mat{\Lambda} \left(\mat{Z}_d\odot \dots \odot \mat{Z}_{n+1} \odot \mat{Z}_n \odot \dots \mat{Z}_1 \right)$. An alternative approach is to use the fact that the CP decomposition may be converted into a low-rank approximation by first expressing it as a special case of a Tucker decomposition and then using the mode-n unfolding of the Tucker decomposition. To see this, we can write 
\[\ten{X} \approx [\mat{\Lambda}; \mat{Z}_1,\dots,\mat{Z}_d]  = \ten{D} \mode{1} \mat{Z}_1 \dots \mode{d}\mat{Z}_d,  \]
where $\ten{D}$ is a super-diagonal tensor with diagonal entries $\ten{D}_{i,\dots,i} = \lambda_i $ and zeros otherwise. 
\end{enumerate}•

\subsubsection{Algorithm}
Assuming a low-rank representation of the form~\eqref{eqn:approx} is available, we show how to convert into an equivalent HOID.

For convenience of notation, in the subsequent discussion, we drop the subscript $n$. The understanding is that the following steps are performed for each mode unfolding. The first step involves computing the SVD of the low-rank approximation.  Given a matrix $\mat{X}$ satisfying $\norm{\mat{X} - \mat{AB}^*}{} \leq \varepsilon$, an approximate SVD of $\mat{X}$ can be obtained by the following steps:
\begin{enumerate}
\item Compute thin QR factorization $\mat{Q}_A\mat{R}_A = \mat{A}$ and $\mat{Q}_B\mat{R}_B = \mat{B}$.
\item Form $\mat{M}=\mat{R}_A\mat{R}_B^*$ and compute its SVD $\mat{M} = {\mat{U}}_M\mat{\Sigma}{\mat{V}}_M^*$.
\item Compute $\mat{U} = \mat{Q}_A\mat{U}_M$ and $\mat{V} = \mat{Q}_B{\mat{V}}_M$.
\end{enumerate}•
Return the approximate SVD $\norm{\mat{X} - \mat{U\Sigma V}^*}{}  \leq \varepsilon$.

The next step involves extracting the relevant columns from each mode-$n$ unfolding. Given an orthonormal basis $\mat{V}$ for the column space of $\mat{X}$,  we seek a set of distinct indices $\vec{p}$ that are representative of the entries of the tensor $\ten{X}$. The indices can then be used to define an interpolatory projector onto the range space of $\mat{V}$. This is defined as follows: 
\begin{equation}\label{eqn:interp_proj}
 \mat{\Pi}_\vec{p} \> \define \> \mat{P} (\mat{V}^*\mat{P})^{-1} \mat{V}^*,\end{equation}•
\noindent provided that $(\mat{V}^*\mat{P})$ is invertible,  where the matrix $\mat{P} = \mat{I}(:,\vec{p})$ contains columns of the identity matrix. It can be readily verified that $\mat{\Pi}_\vec{p}$ is a projector, i.e., it satisfies $\mat{\Pi}_\vec{p}^2 = \mat{\Pi}_\vec{p}$; however, it is an oblique projector and not an orthogonal projector. It has the following ``interpolatory property'' that for any vector $\vec{x} \in \C^{m}$, provided $\mat{\Pi}_\vec{p} \neq \mat{0},\mat{I}$
\[ (\mat{\Pi}_\vec{p}^* \vec{x})(\vec{p})  \>=\> \mat{P}^* \mat{\Pi}_\vec{p}^*\vec{x} \> = \> \mat{P}^* \mat{V} (\mat{P}^*\mat{V})^{-1} \mat{P}^*\vec{x}\> =\> \mat{P}^*\vec{x} \>= \>\vec{x}(\vec{p}).\]
In other words, the action of the projector $\mat{\Pi}_\vec{p}^*$ only extracts the entries of the vector at indices given by $\vec{p}$~\cite{chaturantabut2010nonlinear,sorensen2014deim}. Additionally the interpolatory projector has the following property, provided $\mat{\Pi}_\vec{p} \neq \mat{0},\mat{I}$
\begin{equation}\label{eqn:interp_2} \normtwo{\mat{I} -\mat{\Pi}_\vec{p}}\>  = \> \normtwo{\mat{\Pi}_\vec{p}} \> =\> \normtwo{(\mat{V}^*\mat{P})^{-1}}. 
\end{equation}
After computing the approximate right singular vectors, we can run RRQR on $\mat{V}^*_n$ to obtain the $r_n$ indices, which can be used to extract the relevant columns collected in matrices $\mat{C}_n$.  The core tensor can then be computed using~\eqref{eqn:core}. The resulting procedure is summarized in Algorithm~\ref{alg:hoid2}. Alternative strategies using either DEIM (Algorithm~\ref{alg:deim}) or sampling based on leverage scores (Algorithm~\ref{alg:deim2}) can be used to obtain the set of column indices $\vec{p}_n$ for each mode-$n$. Section~\ref{sec:res} discusses the performance between the different subset selection procedures described in this paper.

\begin{algorithm}
\begin{algorithmic}[1]
\REQUIRE Tensor $\ten{X} \in \mathbb{C}^{I_1 \times \dots \times I_d}$ in low-rank form with ranks $(r_1,\dots,r_d)$. 
\FOR {n=1,\dots,d}
\STATE Compute the SVD of unfolding $\mat{X}_{(n)} \approx \mat{A}_n\mat{B}_n^* = \mat{U}_n\mat{\Sigma}_n \mat{V}_n^*$.
\STATE Compute an index set $\vec{p}_n \in \mathbb{N}^{r_n}$ by applying RRQR on $\mat{V}_n^*$.
%\STATE Form the interpolatory projector
%\[ \mat{\Pi}_n \define \mat{P}_n (\mat{V}_n^*\mat{P}_n)^{-1} \mat{V}_n^* \qquad \mat{P}(\vec{p}_n,:) = \mat{I}_{r_n}\]
\STATE Extract columns indexed by $\vec{p}_n$ from the unfolding $\mat{X}_{(n)}$ denoted by $\mat{C}_n$.
\ENDFOR
\STATE If necessary, compute core tensor $\ten{G} \in \mathbb{C}^{r_1\times \dots \times r_d}$ as
\[ \ten{G} \>\define \>  \ten{X} \mode{1} \mat{C}_1^\dagger \dots \mode{d}\mat{C}_d^\dagger.\]
\RETURN Tucker decomposition $[\ten{G}; \mat{C}_1,\dots,\mat{C}_d]$.
\end{algorithmic}
\caption{Converting an existing low-rank decomposition into HOID format }
\label{alg:hoid2}
\end{algorithm}
\subsubsection{Error estimate and computational cost}
We now derive an estimate for the error incurred to produce an interpolatory decomposition based on an approximate SVD of the mode-$n$ unfolding. We first present a result, related to~\cite[Lemma 4.2]{sorensen2014deim}, and use it to derive Theorem~\ref{thm:error2}.

\begin{lemma} \label{lemma:deim}
Assume that $\mat{P}^*\mat{V}$ is invertible and let $\mat{\Pi}_\vec{p} = \mat{P}(\mat{V}^*\mat{P})^{-1}\mat{V}^*$ be an interpolatory projector. If $\mat{V}$ is orthonormal then for any $\mat{A} \in \mathbb{C}^{m\times n}$ 
\begin{equation}
\normf{\mat{A} - \mat{A\Pi_\vec{p}} } \> \leq \> \normtwo{\mat{I} - \mat{\Pi}_\vec{p} }\normf{\mat{A}(\mat{I}-\mat{VV}^*)}{}.
\end{equation}•
\end{lemma}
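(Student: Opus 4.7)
The plan is to reduce the bound to a norm inequality by inserting the orthogonal projector $\mat{V}\mat{V}^*$ into the residual factor $\mat{I} - \mat{\Pi}_{\vec{p}}$. First I would rewrite the residual as $\mat{A} - \mat{A}\mat{\Pi}_{\vec{p}} = \mat{A}(\mat{I} - \mat{\Pi}_{\vec{p}})$, which is the natural starting point.

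The key algebraic observation is that $\mat{V}\mat{V}^* \mat{\Pi}_{\vec{p}} = \mat{V}\mat{V}^*$. This holds because
\[ \mat{V}\mat{V}^*\mat{\Pi}_{\vec{p}} \>=\> \mat{V}\mat{V}^*\mat{P}(\mat{V}^*\mat{P})^{-1}\mat{V}^* \>=\> \mat{V}\bigl[(\mat{V}^*\mat{P})(\mat{V}^*\mat{P})^{-1}\bigr]\mat{V}^* \>=\> \mat{V}\mat{V}^*, \]
so that $\mat{V}\mat{V}^*(\mat{I} - \mat{\Pi}_{\vec{p}}) = \mat{0}$. Consequently, writing the identity as $\mat{I} = \mat{V}\mat{V}^* + (\mat{I} - \mat{V}\mat{V}^*)$ and multiplying through on the right by $(\mat{I} - \mat{\Pi}_{\vec{p}})$ yields
\[ \mat{I} - \mat{\Pi}_{\vec{p}} \>=\> (\mat{I} - \mat{V}\mat{V}^*)(\mat{I} - \mat{\Pi}_{\vec{p}}). \]
Here orthonormality of $\mat{V}$ is what makes $\mat{V}\mat{V}^*$ a genuine orthogonal projector so that this decomposition is meaningful.

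With this identity in hand, I would multiply on the left by $\mat{A}$ and apply submultiplicativity of the Frobenius norm against the spectral norm, namely $\normf{\mat{X}\mat{Y}} \leq \normf{\mat{X}}\normtwo{\mat{Y}}$:
\[ \normf{\mat{A} - \mat{A}\mat{\Pi}_{\vec{p}}} \>=\> \normf{\mat{A}(\mat{I} - \mat{V}\mat{V}^*)(\mat{I} - \mat{\Pi}_{\vec{p}})} \>\leq\> \normf{\mat{A}(\mat{I} - \mat{V}\mat{V}^*)}\normtwo{\mat{I} - \mat{\Pi}_{\vec{p}}}, \]
which is the claimed inequality.

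There is no real obstacle here—the only subtle point is recognizing that $\mat{V}\mat{V}^*\mat{\Pi}_{\vec{p}} = \mat{V}\mat{V}^*$, which expresses the fact that the interpolatory projector $\mat{\Pi}_{\vec{p}}$ agrees with the identity on the range of $\mat{V}$. Once this is seen, the rest is a one-line application of the mixed Frobenius/spectral submultiplicativity.
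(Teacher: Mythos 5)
Your proof is correct and follows essentially the same route as the paper: both identify the annihilation identity $\mat{V}^*(\mat{I}-\mat{\Pi}_\vec{p})=\mat{0}$ (you phrase it as $\mat{V}\mat{V}^*(\mat{I}-\mat{\Pi}_\vec{p})=\mat{0}$, which is equivalent), insert $\mat{I}=\mat{V}\mat{V}^*+(\mat{I}-\mat{V}\mat{V}^*)$ to get $\mat{A}(\mat{I}-\mat{\Pi}_\vec{p})=\mat{A}(\mat{I}-\mat{V}\mat{V}^*)(\mat{I}-\mat{\Pi}_\vec{p})$, and finish with the mixed Frobenius/spectral submultiplicativity. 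No gaps.
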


\begin{proof}
The proof is adapted from  Sorensen and Embree~\cite[Lemma 4.1]{sorensen2014deim}. Since $\mat{V}^*\mat{\Pi}_\vec{p} = \mat{V}^*$, therefore $\mat{V}^*(\mat{I} - \mat{\Pi}_\vec{p}) = \mat{0}$. Therefore,  
\[ \mat{A} (\mat{I} - \mat{\Pi}_\vec{p}) = \mat{A}  (\mat{I} - \mat{V}\mat{V}^*)(\mat{I} - \mat{\Pi}_\vec{p}) .\]
Taking the Frobenius norm of $\mat{A} (\mat{I} - \mat{\Pi}_\vec{p})$, and applying the sub-multiplicative property of the Frobenius norm, the result follows.
\end{proof}

%We would like to remark that Sorensen and Embree~\cite[Lemma 4.2]{sorensen2014deim} are able to  make further progress, since in the 2-norm the equality in Equation~\eqref{eqn:interp_2} holds. However, an equivalent result does not hold for the Frobenius norm; therefore, we must use Lemma~\ref{lemma:proj}. 

\begin{theorem}\label{thm:error2}
Let the matrices $\mat{C}_n$ for $n=1,\dots,d$ and the core tensor $\ten{G}$ be computed according to Algorithm~\ref{alg:hoid}. Then we have the following error bound
\begin{equation}\label{eqn:errbound2}
 \normf{\ten{E}}^2 \> = \>  \normf{\ten{X} - \ten{G} \mode{1} \mat{C}_1\dots \mode{d}\mat{C}_d }^2  \> \leq \> \sum_{n=1}^d q_n \cdot \varepsilon_n^2,  %\normf{(\mat{V}^*_n\mat{P}_n)^{-1}} %\normf{\mat{X}_{(n)}(\mat{I}- \mat{V}_n\mat{V}_n^*)}
\end{equation}•
where the factors $q_n$ are defined in Theorem~\ref{thm:error1}, and $\varepsilon_n$ is defined in~\eqref{eqn:approx}.
\end{theorem}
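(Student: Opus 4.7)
The plan is to mirror the structure of Theorem~\ref{thm:error1} and then bound the mode-$n$ residuals using the interpolatory projector $\mat{\Pi}_{\vec{p}_n}$ associated with the approximate right singular vectors $\mat{V}_n$, rather than the raw RRQR residual $\mat{E}_n$. First, plugging in the definition of $\ten{G}$, the product $\ten{G} \mode{1} \mat{C}_1 \cdots \mode{d} \mat{C}_d$ equals $\ten{X}$ acted upon by the orthogonal projectors $\mat{C}_n \mat{C}_n^\dagger$, so Lemma~\ref{lemma:proj} gives
\[ \normf{\ten{E}}^2 \>\leq\> \sum_{n=1}^d \normf{(\mat{I}-\mat{C}_n\mat{C}_n^\dagger)\mat{X}_{(n)}}^2. \]
This reduces the problem to bounding each mode-$n$ residual by $q_n \varepsilon_n^2$.

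For a fixed $n$, define $\mat{P}_n \define \mat{I}(:,\vec{p}_n)$ so that $\mat{C}_n = \mat{X}_{(n)}\mat{P}_n$ and $\mat{\Pi}_{\vec{p}_n} = \mat{P}_n(\mat{V}_n^*\mat{P}_n)^{-1}\mat{V}_n^*$. The columns of $\mat{X}_{(n)}\mat{\Pi}_{\vec{p}_n}$ lie in $\text{range}(\mat{C}_n)$, so by the best-approximation property of the orthogonal projector $\mat{C}_n \mat{C}_n^\dagger$,
\[ \normf{(\mat{I}-\mat{C}_n\mat{C}_n^\dagger)\mat{X}_{(n)}} \>\leq\> \normf{\mat{X}_{(n)} - \mat{X}_{(n)}\mat{\Pi}_{\vec{p}_n}}. \]
Applying Lemma~\ref{lemma:deim} with $\mat{A} = \mat{X}_{(n)}$ and $\mat{V} = \mat{V}_n$ then yields
\[ \normf{\mat{X}_{(n)} - \mat{X}_{(n)}\mat{\Pi}_{\vec{p}_n}} \>\leq\> \normtwo{\mat{I}-\mat{\Pi}_{\vec{p}_n}} \cdot \normf{\mat{X}_{(n)}(\mat{I}-\mat{V}_n\mat{V}_n^*)}. \]

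The two factors on the right are bounded separately. Since $\mat{X}_{(n)} \approx \mat{A}_n\mat{B}_n^* = \mat{U}_n\mat{\Sigma}_n \mat{V}_n^*$ with error at most $\varepsilon_n$ by~\eqref{eqn:approx}, and since $\mat{U}_n\mat{\Sigma}_n\mat{V}_n^*(\mat{I}-\mat{V}_n\mat{V}_n^*)=\mat{0}$ (as $\mat{V}_n$ is orthonormal), we obtain
\[ \normf{\mat{X}_{(n)}(\mat{I}-\mat{V}_n\mat{V}_n^*)} \>=\> \normf{(\mat{X}_{(n)}-\mat{U}_n\mat{\Sigma}_n\mat{V}_n^*)(\mat{I}-\mat{V}_n\mat{V}_n^*)} \>\leq\> \varepsilon_n. \]
For the other factor, equation~\eqref{eqn:interp_2} gives $\normtwo{\mat{I}-\mat{\Pi}_{\vec{p}_n}} = \normtwo{(\mat{V}_n^*\mat{P}_n)^{-1}}$. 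Applying strong RRQR to $\mat{V}_n^*$ (which is $r_n \times \prod_{k\neq n} I_k$ with all nonzero singular values equal to $1$), the bound in~\eqref{eqn:rrqr} yields $\sigma_{r_n}(\mat{V}_n^*\mat{P}_n) \geq 1/\sqrt{q_n}$, so $\normtwo{(\mat{V}_n^*\mat{P}_n)^{-1}} \leq \sqrt{q_n}$.

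Combining these estimates gives $\normf{(\mat{I}-\mat{C}_n\mat{C}_n^\dagger)\mat{X}_{(n)}}^2 \leq q_n \varepsilon_n^2$, and summing over $n$ produces the claimed bound~\eqref{eqn:errbound2}. The main subtlety is the step that replaces the orthogonal projector $\mat{C}_n\mat{C}_n^\dagger$ with the oblique interpolatory projector $\mat{\Pi}_{\vec{p}_n}$: it requires recognizing that $\mat{X}_{(n)}\mat{\Pi}_{\vec{p}_n}$ is in the column space of $\mat{C}_n$, so that the best-approximation inequality kicks in. Once this bridge is in place, Lemma~\ref{lemma:deim} cleanly separates the low-rank approximation error $\varepsilon_n$ from the geometric quality of the chosen index set, the latter being controlled by the strong RRQR guarantee on $\mat{V}_n^*$.
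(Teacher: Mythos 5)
Your proposal is correct and follows essentially the same route as the paper's proof: Lemma~\ref{lemma:proj} to reduce to mode-$n$ residuals, the observation that $\mat{X}_{(n)}\mat{\Pi}_{\vec{p}_n}$ lies in the range of $\mat{C}_n$ (which the paper phrases as $(\mat{I}-\mat{C}_n\mat{C}_n^\dagger)\mat{X}_{(n)}\mat{\Pi}_{\vec{p}_n}=\mat{0}$ plus submultiplicativity, and you phrase as the best-approximation property of $\mat{C}_n\mat{C}_n^\dagger$), then Lemma~\ref{lemma:deim} together with the bounds $\normf{\mat{X}_{(n)}(\mat{I}-\mat{V}_n\mat{V}_n^*)}\leq\varepsilon_n$ and $\normtwo{(\mat{V}_n^*\mat{P}_n)^{-1}}\leq\sqrt{q_n}$ from strong RRQR. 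The two phrasings of the bridging step are equivalent, so there is nothing substantive to distinguish.
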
•
\begin{proof}
The proof has three main steps.

\paragraph{1. Exploiting structure} From Lemma~\ref{lemma:proj} 
\[\normf{\ten{X} - \ten{G} \mode{1} \mat{C}_1\dots \mode{d}\mat{C}_d }^2    \> \leq \> \sum_{n=1}^d  \normf{\ten{X} - \ten{X} \mode{n} \mat{C}_n \mat{C}_n^\dagger}^2 \> \leq \> \sum_{n=1}^d \normf{(\mat{I} - \mat{C}_n \mat{C}_n^\dagger)\mat{X}_{(n)}}^2. \]
 We define the interpolatory projector $\mat{\Pi}_n \define \mat{P}_n (\mat{V}_n^*\mat{P}_n)^{-1} \mat{V}_n^*$. We can apply the result of~\cite[Lemma 4.2]{sorensen2014deim}; however, we provide an alternative proof which is much shorter. Write 
\[ (\mat{I} - \mat{C}_n \mat{C}_n^\dagger)\mat{X}_{(n)} = (\mat{I} - \mat{C}_n \mat{C}_n^\dagger)\mat{X}_{(n)}(\mat{I}- \mat{\Pi}_n) + (\mat{I} - \mat{C}_n \mat{C}_n^\dagger)\mat{X}_{(n)}\mat{\Pi}_n.\]

Next, from $\mat{X}_{(n)}\mat{P}_n = \mat{C}_n$, follows $\mat{X}_{(n)}\mat{\Pi}_n = \mat{C}_n (\mat{V}_n^*\mat{P}_n)^{-1} \mat{V}_n^*$ and therefore, $(\mat{I} - \mat{C}_n \mat{C}_n^\dagger)\mat{X}_{(n)}\mat{\Pi}_n = \mat{0}$.

Since $(\mat{I} - \mat{C}_n \mat{C}_n^\dagger)$ is an orthogonal projector and  $ \normtwo{\mat{I} - \mat{C}_n\mat{C}_n^\dagger} \leq 1$, using the sub-multiplicativity of the Frobenius norm ,
\begin{align} \nonumber
\normf{\ten{E}}^2 \> \leq &  \> \sum_{n=1}^d \normf{(\mat{I} - \mat{C}_n \mat{C}_n^\dagger)\mat{X}_{(n)}(\mat{I}- \mat{\Pi}_n)}^2 \\ \nonumber
\leq &  \> \sum_{n=1}^d \normtwo{(\mat{I} - \mat{C}_n \mat{C}_n^\dagger)}^2\normf{\mat{X}_{(n)}(\mat{I}- \mat{\Pi}_n)}^2 \\ 
\label{eqn:inter1}
    \leq & \> \sum_{n=1}^d  \normtwo{\mat{I} - \mat{\Pi}_n}^2 \normf{\mat{X}_{(n)}(\mat{I}- \mat{V}_n\mat{V}_n^*)}^2.
\end{align}•
The last step follows from Lemma~\ref{lemma:deim}.
 \paragraph{2. Bounding $\normf{\mat{X}_{(n)}(\mat{I}- \mat{V}_n\mat{V}_n^*)}$} Observe that $\mat{U}_n\mat{\Sigma}\mat{V}_n^*(\mat{I}- \mat{V}_n\mat{V}_n^*) = 0$. Therefore, from~\eqref{eqn:approx},  we have
\begin{align} \nonumber\normf{\mat{X}_{(n)}(\mat{I}- \mat{V}_n\mat{V}_n^*)} \> =& \> \normf{(\mat{X}_{(n)} - \mat{U}_n\mat{\Sigma}_n\mat{V}_n^*) (\mat{I}- \mat{V}_n\mat{V}_n^*)}  \\ \label{eqn:inter2} \leq & \> \normf{\mat{X}_{(n)} - \mat{U}_n\mat{\Sigma}_n\mat{V}_n^*}. \end{align}
The intermediate step follows since an orthogonal projection operator does not increase the Frobenius norm. Recall that $ \normf{\mat{X}_{(n)} - \mat{U}_n\mat{\Sigma}_n\mat{V}_n^*} \leq \varepsilon_n$, where $\varepsilon_n$ is defined in~\eqref{eqn:approx}.

 \paragraph{3. Bounding $\normtwo{\mat{I} - \mat{\Pi}_n}^2$} From~\eqref{eqn:interp_2}, provided $\mat{\Pi}_\vec{p} \neq \mat{0},\mat{I}$ $$\normtwo{\mat{I} - \mat{\Pi}_n} = \normtwo{\mat{\Pi}_n} = \normtwo{(\mat{V}^*_n\mat{P}_n)^{-1}}.$$
 Next,  following~\cite{ipsen2015} we consider the term $\normtwo{(\mat{V}^*_n\mat{P}_n)^{-1}}$. From the bounds in~\eqref{eqn:rrqr} we have that 
\begin{equation}\label{eqn:rrqrinter}
\mat{V}_n^*[ \mat{P}_n, \mat{P}_n^c] = \mat{Q} [\mat{R}_{11}^{(n)},  \mat{R}_{22}^{(n)}]\quad \text{and}\quad  \sigma_{i}(\mat{R}_{11}^{(n)}) \geq\frac{\sigma_i(\mat{V}_n)}{\sqrt{1   +f^2r_n(\prod_{k\neq n} I_k - r_n )}},
\end{equation}• 
\noindent with $i=1,\dots r_n$. Since $\mat{V}_n$ is an orthonormal matrix $\sigma_i(\mat{V}_n) = 1$ for $i=1,\dots, r_n$. From this we can conclude that 
\[ \normtwo{(\mat{V}^*_n\mat{P}_n)^{-1} }  \> = \> \normtwo{(\mat{R}_{11}^{(n)})^{-1}} \> = \> \left(\sigma_{r_n}(\mat{R}_{11}^{(n)})\right)^{-1},  \]
and combining with~\eqref{eqn:rrqrinter} we obtain 
\begin{equation}\label{eqn:inter3}
\normtwo{\mat{I}- \mat{\Pi}_n}^2 \> = \> \normtwo{(\mat{V}^*_n\mat{P}_n)^{-1} }^2 \>\leq  \>  1 +   f^2r_n(\prod_{k\neq n} I_k - r_n ) \> \equiv \> q_n.
\end{equation}•

Plugging the results from~\eqref{eqn:inter2} and~\eqref{eqn:inter3} into~\eqref{eqn:inter1}, we see readily see that~\eqref{eqn:errbound2} follows. 
\end{proof}

Suppose that we use the exact singular vectors $\mat{V}_n$ corresponding to the mode-unfolding $\mat{X}_{(n)}$; say using HOSVD Algorithm~\ref{alg:hosvd}, then 
 \[ \normf{\mat{X}_{(n)}(\mat{I}- \mat{V}_n\mat{V}_n^*)} \> =  \> \left( \sum_{k > r_n} \sigma_{k}^2(\mat{X}_{(n)}) \right)^{1/2}, \]
in which case the result of Theorem~\ref{thm:error2} is sub-optimal over Theorem~\ref{thm:error1}. The comments in Remark~\ref{remark1} are also relevant here. 
 
In this paper, we assume that the exact singular values and vectors are not available, but they are computed approximately, i.e., we  assume  that the low-rank decomposition satisfies~\eqref{eqn:approx}. From the result of Theorem~\ref{thm:error2}, we can see that converting an approximate low-rank representation into a interpolatory decomposition can worsen the resulting representation error. The bound in Theorem~\ref{thm:error2} also suggests a truncation strategy: to achieve an overall tolerance $\varepsilon$, the truncation tolerance in each dimension $\epsilon_n$ must satisfy $\varepsilon_n^2 = {\varepsilon^2}/{dq_n}$. However, numerical experiments in Section~\ref{sec:res} will show that the resulting error is not significantly large and may not significantly amplify the error of the original decomposition. 

The total cost of this algorithm is summarized in Table~\ref{tab:cost2}.
\begin{table}[!ht] \centering
\begin{tabular}{|c|c|c|} \hline
•Step & Description & Cost \\ \hline 
1 & low rank SVD  &  $ \bigO \left( \sum_{n=1}^d r_n^2 (I_n + \prod_{k\neq n} I_k )  + r_n^3\right) $ \\ \hline
2 & RRQR  & $\bigO\left(\sum_{n=1}^k r_n^2  \prod_{k\neq n} I_k\right) $ \\ \hline 
3 & Core Tensor & $\bigO\left( \sum_{n=1}^d\prod_{ j \leq n } r_j \prod_{k \geq n} I_k\ +  r_n^2I_n\right) $  \\ \hline
\end{tabular}•
\caption{Computational cost of the Higher order Interpolatory Decomposition, Algorithm~\ref{alg:hoid2}.}
\label{tab:cost2}
\end{table}•

\subsection{Sequentially Truncated Higher Order Interpolatory Decomposition}
In this subsection, we present a different truncation strategy based on the Sequentially Truncated HOSVD algorithm (ST-HOSVD) proposed in~\cite{andersson1998improving} and studied by~\cite{vannieuwenhoven2012new}. As was shown by~\cite{vannieuwenhoven2012new}, this algorithm retains several of the favorable properties of truncated HOSVD algorithm while reducing the computational cost of computing the decomposition. As was summarized in Algorithm~\ref{alg:hosvd}, given a tensor $\ten{X}$ the algorithm computes the left singular vectors corresponding to the largest $r_n$ singular values of the mode-unfolding $\mat{X}_{(n)}$. Then the core tensor $\ten{G}$ is computed by multiplying $\mat{U}_n^*$ along each mode of $\ten{X}$. The HOSVD computed as described above is expensive because it involves applying the SVD algorithm on a full matrix unfolding. 

The ST-HOSVD algorithm is based on the following observation: The HOSVD algorithm can be expressed using orthogonal projectors as the following optimization problem 
\begin{align*}
\min_{\pi_1,\dots,\pi_d} \normf{\ten{X} -  \ten{X} \mode{1}\pi_1\mode{2}\cdots \mode{d}\pi_d}^2 \> = & \>
  \min_{\pi_1} \left\{ \normf{ \ten{X}\mode{1} \pi^\perp_1}^2 +  \min_{\pi_2} \left\{\normf{ \ten{X} \mode{1} \pi_1 \mode{2} \pi^\perp_2}^2  +\right.\right.\\ \label{eqn:hosvdopt}
&\>  \min_{\pi_3}     \left\{\left.\left.  + \dots +\min_{\pi_d}  \normf{\ten{X}\mode{1}\pi_1 \cdots \mode{d}\pi_d^\perp}^2 \right\}\right\}\right\}. 
\end{align*}•
The ST-HOSVD replaces this optimization problem above with a  sub-optimal optimization problem  
\begin{align*}
\min_{\pi_1,\dots,\pi_d} \normf{\ten{X} - \ten{X} \mode{1}\pi_1\mode{2}\cdots \mode{d}\pi_d}^2\quad \leq &  \quad \normf{ \ten{X}\mode{1}\hat\pi^\perp_1}^2 + \normf{ \ten{X} \mode{1} \hat\pi_1 \mode{2}\hat\pi_2^\perp}^2 + \cdots \\
& \quad  \normf{ \ten{X}  \mode{1}\hat\pi_1\mode{2}\cdots  \mode{d-1}\hat\pi_{d-1}\mode{d}\hat\pi_d^\perp}^2\,,
\end{align*}•
where the projectors $\hat\pi_n$ for $n=1,\dots,d$ are defined recursively as 
\[ \hat\pi_n = \argmin_{\pi_n} \normf{\ten{X}\mode{1}\hat\pi_1 \mode{2}\cdots  \mode{n-1}\hat\pi_{n-1}  \mode{n}\pi_n^\perp}.\]
The other important difference between ST-HOSVD compared to the truncated HOSVD is that the former algorithm is dependent on the processing order of the modes, whereas the latter algorithm is independent of the processing order. Here, for simplicity we have presented the algorithm with the processing order of the modes $\vec{p} = \{ 1,\dots, n\}$; however, the results are strongly dependent on the processing order and heuristic methods for deciding the mode order are discussed in~\cite{vannieuwenhoven2012new}.

\begin{algorithm}
\begin{algorithmic}[1]
\REQUIRE Tensor $\ten{X} \in \mathbb{C}^{I_1 \times \dots \times I_d}$ and desired rank $(r_1,\dots,r_d)$.
\STATE Define  tensor $\ten{S}^{(0)} \leftarrow \ten{X}$.
\FOR {n=1,\dots,d}
\STATE Compute the rank $r_n$ SVD of the tensor unfolding $\mat{S}_{(n)}^{(n-1)} \approx \hat{\mat{U}}_n \hat{\mat{\Sigma}}_n \hat{\mat{V}}_n^* $.
\STATE Update  $\ten{S}^{(n)} \leftarrow \hat{\mat{\Sigma}}_n \hat{\mat{V}}_n^*$.
\STATE Form unfolding $\mat{X}_{(n)}$ and its low rank approximation (only if $n>1$, otherwise skip this step and use $\tilde{\mat{V}}_n = \hat{\mat{V}}_n$)
\[ \mat{X}_{(n)} \> \approx \>  \hat{\mat{U}}_n \hat{\mat{S}}_{(n)}^{(n-1)}   \>  = \> \tilde{\mat{U}}_n\tilde{\mat{\Sigma}}_n \tilde{\mat{V}}_n^*, \]
where 
\[\hat{\ten{S}}^{(n-1)} \> \define \>{\ten{S}}^{(n)} \mode{1} \hat{\mat{U}}_1 \cdots \mode{n-1} \hat{\mat{U}}_{n-1}. \] % \qquad   {\ten{S}}^{(n-1)} = \ten{X} \mode{1}\hat{\mat{U}}_1^* \cdots \mode{n-1} \hat{\mat{U}}_{n-1}^*. \]
\STATE Compute an index set $\vec{p}_n \in \mathbb{N}^{r_n}$ by applying RRQR on $\tilde{\mat{V}}_n^*$.
\STATE Extract columns indexed by $\vec{p}_n$ from the unfolding $\mat{X}_{(n)}$ denoted by $\mat{C}_n$.
\ENDFOR
\STATE Compute core tensor $\ten{G} \in \mathbb{C}^{r_1\times \dots \times r_d}$ as 
\[ \ten{G} \> \define \>  \ten{X} \mode{1} \mat{C}_1^\dagger \mode{2} \dots \mode{d}\mat{C}_d^\dagger.\]
\RETURN Tucker decomposition $[\ten{G}; \mat{C}_1,\dots,\mat{C}_d]$.
\end{algorithmic}
\caption{ Sequentially Truncated Higher Order Interpolatory Decomposition }
\label{alg:sthoid}
\end{algorithm}
One easy extension of ST-HOSVD to compute the HOID, is to directly apply Algorithm~\ref{alg:hoid} to the  low-rank representation from ST-HOSVD. However, since the errors in the low-rank approximation accumulate as the modes are processed, we adopt a slightly different approach.  The procedure to compute ST-HOID is summarized here. %At the n-th step, we compute an ID (using strong RRQR) of the mode-n unfolding $\mat{S}_{(n)}$ of a sequentially truncated tensor $\ten{S}$, which at the start of step n has dimensions $r_1\times \cdots \times r_{n-1} \times I_n\times \cdots \times I_d$. Initially the tensor $\ten{S}$ is initialized to be $\ten{X}$. From the matrix ID we can extract $r_n$ relevant columns from the full tensor unfolding $\mat{X}_{(n)}$ to give the column matrix $\mat{C}_n$. At the end of $d$ steps, we are left with the core tensor $\ten{G}$. We also assume that the processing order of the modes have been fixed to $\vec{p} =\{1,\dots,n\}$. The Algorithm~\ref{alg:sthoid} can be easily generalized to different mode orderings and alternative truncation strategy following the heuristics of~\cite{vannieuwenhoven2012new}; however, we haven't explored it in our work. 
The procedure follows ST-HOSVD algorithm closely; instead of computing the left singular vectors $\mat{U}_n$ from the mode-$n$ unfolding of $\ten{X}$, it is approximated by the smaller tensor $\ten{S}^{(n)}$, sequentially truncated. Our modification is the following: During the ST-HOSVD procedure we explicitly compute a low-rank approximation to $\mat{X}_{(n)}$ using the pieces of information already available at step $n$. Then strong RRQR algorithm is applied to the approximate singular vectors $\tilde{\mat{V}}_n^*$ to extract the appropriate columns from $\mat{X}_{(n)}$. We also assume that the processing order of the modes have been fixed to $\vec{p} =\{1,\dots,n\}$. Algorithm~\ref{alg:sthoid} can be easily generalized to different mode orderings and alternative truncation strategies following the heuristics of~\cite{vannieuwenhoven2012new}; however, we haven't explored this in our work.

\subsubsection{Error estimate and computational cost}
We now quantify the error using ST-HOID (Algorithm~\ref{alg:sthoid}) by means of the following result.

\begin{theorem}\label{thm:error3}
Let the matrices $\mat{C}_n$ for $n=1,\dots,d$ and the core tensor $\ten{G}$ be computed according to Algorithm~\ref{alg:sthoid}. Then we have the following error bound
\[ \normf{\ten{E}}^2  = \normf{\ten{X} - \ten{G} \mode{1} \mat{C}_1\dots \mode{d}\mat{C}_d }^2 \>  \leq \>  \sum_{n=1}^d q_n\sum_{k=1}^{n} \left(\normf{\ten{X}^{(k-1)}}^2 -\normf{\ten{X}^{(k)}}^2\right),\]
\noindent where the factor $q_n$ was defined in Theorem~\ref{thm:error1} and 
\[ \ten{X}^{(n)}  \define \ten{X} \mode{1} \hat{\mat{U}}_1 \hat{\mat{U}}_1^* \dots \mode{n} \hat{\mat{U}}_n \hat{\mat{U}}_n^* \mode{n+1} \mat{I}_{I_{n+1}} \cdots \mode{d} \mat{I}_{I_d},\]
with $\ten{X}^{(0)} \define \ten{X}$.  
\end{theorem}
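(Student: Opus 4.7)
The approach is to mirror the proof of Theorem~\ref{thm:error2}, replacing the externally supplied low-rank factorization by the sequentially compressed one from Algorithm~\ref{alg:sthoid}, and then to convert the resulting low-rank approximation error into the telescoping sum appearing in the statement. I break the argument into three steps.

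First, I apply Lemma~\ref{lemma:proj} to obtain the mode-wise decomposition $\|\ten{E}\|_F^2 \leq \sum_{n=1}^d \|(\mat{I}-\mat{C}_n\mat{C}_n^\dagger)\mat{X}_{(n)}\|_F^2$. For each mode, introduce the interpolatory projector $\mat{\Pi}_n \equiv \mat{P}_n(\tilde{\mat{V}}_n^*\mat{P}_n)^{-1}\tilde{\mat{V}}_n^*$ built from the selected indices $\vec{p}_n$ and the approximate right singular vectors $\tilde{\mat{V}}_n$. Because $\mat{X}_{(n)}\mat{\Pi}_n$ is in the range of $\mat{C}_n$, the splitting argument from the proof of Theorem~\ref{thm:error2} applies verbatim; combining it with Lemma~\ref{lemma:deim} and the strong RRQR bound $\|(\tilde{\mat{V}}_n^*\mat{P}_n)^{-1}\|_2^2 \leq q_n$ from~\eqref{eqn:rrqrinter} yields
\[ \|(\mat{I}-\mat{C}_n\mat{C}_n^\dagger)\mat{X}_{(n)}\|_F^2 \> \leq \> q_n \|\mat{X}_{(n)}(\mat{I}-\tilde{\mat{V}}_n\tilde{\mat{V}}_n^*)\|_F^2 \> \leq \> q_n \|\mat{X}_{(n)} - \tilde{\mat{U}}_n\tilde{\mat{\Sigma}}_n\tilde{\mat{V}}_n^*\|_F^2. \]

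Second, I identify the rank-$r_n$ approximation used in line~5 of Algorithm~\ref{alg:sthoid} with the sequentially projected tensor. The key observation is that, even when the mode-$n$ SVD is truncated, $\hat{\mat{U}}_n^*\mat{S}_{(n)}^{(n-1)} = \hat{\mat{\Sigma}}_n\hat{\mat{V}}_n^*$, so the update rule gives $\ten{S}^{(n)} = \ten{S}^{(n-1)}\times_n\hat{\mat{U}}_n^*$ exactly. An easy induction then shows $\ten{S}^{(n)} = \ten{X}\times_1\hat{\mat{U}}_1^*\cdots\times_n\hat{\mat{U}}_n^*$, which implies $\tilde{\mat{U}}_n\tilde{\mat{\Sigma}}_n\tilde{\mat{V}}_n^* = \hat{\mat{U}}_n\hat{\mat{S}}_{(n)}^{(n-1)} = \mat{X}_{(n)}^{(n)}$. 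Consequently $\|\mat{X}_{(n)} - \tilde{\mat{U}}_n\tilde{\mat{\Sigma}}_n\tilde{\mat{V}}_n^*\|_F = \|\ten{X}-\ten{X}^{(n)}\|_F$.

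Third, I telescope $\|\ten{X}-\ten{X}^{(n)}\|_F^2$. Since $\ten{X}^{(k)} = \ten{X}^{(k-1)}\times_k \hat{\mat{U}}_k\hat{\mat{U}}_k^*$, the Pythagorean identity~\eqref{eqn:fronorm} gives $\|\ten{X}^{(k-1)}-\ten{X}^{(k)}\|_F^2 = \|\ten{X}^{(k-1)}\|_F^2 - \|\ten{X}^{(k)}\|_F^2$. For the full sum to collapse, I also need the successive residuals to be mutually Frobenius-orthogonal: for $k<l$, the mode-$k$ unfolding of $\ten{X}^{(k-1)}-\ten{X}^{(k)}$ carries the factor $\mat{I}-\hat{\mat{U}}_k\hat{\mat{U}}_k^*$ while that of $\ten{X}^{(l-1)}-\ten{X}^{(l)}$ carries $\hat{\mat{U}}_k\hat{\mat{U}}_k^*$, and the two projectors annihilate each other inside the trace. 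This yields the Pythagorean telescoping
\[ \|\ten{X}-\ten{X}^{(n)}\|_F^2 \>=\> \sum_{k=1}^n \bigl(\|\ten{X}^{(k-1)}\|_F^2 - \|\ten{X}^{(k)}\|_F^2\bigr), \]
and plugging this back into the bound from Step 1 completes the proof.

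The main obstacle is Step~2: one must recognize that in the ST-HOSVD recursion the truncated SVD at stage $n$ is consistent with applying the orthogonal projector $\hat{\mat{U}}_n\hat{\mat{U}}_n^*$ to the already-compressed tensor, so that the ``approximate SVD'' of $\mat{X}_{(n)}$ exploited in the column-selection step is exactly the mode-$n$ unfolding of the sequentially projected tensor $\ten{X}^{(n)}$. Once this identification is in place, the other two steps are direct adaptations of the proof of Theorem~\ref{thm:error2} and a standard Frobenius Pythagorean telescoping.
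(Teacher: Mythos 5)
Your proof is correct and follows essentially the same route as the paper: steps 1--3 of the proof of Theorem~\ref{thm:error2} give the factor $q_n$ and reduce the bound to $\sum_n q_n\normf{\mat{X}_{(n)}-\tilde{\mat{U}}_n\tilde{\mat{\Sigma}}_n\tilde{\mat{V}}_n^*}^2$, and the identification $\tilde{\mat{U}}_n\tilde{\mat{\Sigma}}_n\tilde{\mat{V}}_n^*=\hat{\mat{U}}_n\hat{\mat{S}}_{(n)}^{(n-1)}$ with the mode-$n$ unfolding of $\ten{X}^{(n)}$ is exactly the paper's argument. The only difference is that for the final telescoping of $\normf{\ten{X}-\ten{X}^{(n)}}^2$ you supply the mutual-orthogonality/Pythagorean argument directly, whereas the paper invokes a variant of~\cite[Theorem 6.4]{vannieuwenhoven2012new}; your version correctly fills in that cited step.
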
•
\begin{proof}
Following steps 1 and 2 from the proof of Theorem~\ref{thm:error2}, the intermediate expression for the error is (i.e., combine~\eqref{eqn:inter1} and~\eqref{eqn:inter2})
\[ \normf{\ten{E}}^2 \>  \leq \> \sum_{n=1}^d \normtwo{\mat{I} - \mat{\Pi}_n}^2 \normf{\mat{X}_{(n)}- \tilde{\mat{U}}_n \tilde{\mat{\Sigma}}_n\tilde{\mat{V}}_n^*}^2, \]
where $\tilde{\mat{V}}_n$ are the approximate singular vectors constructed in Algorithm~\ref{alg:sthoid}. From step 3 of the proof of Theorem~\ref{thm:error2}, $\normtwo{\mat{I} - \mat{\Pi}_n}^2 \leq q_n$ where $q_n$ was defined in Theorem~\ref{thm:error1}. Next, we have that $\tilde{\mat{U}}_n \tilde{\mat{\Sigma}}_n\tilde{\mat{V}}_n^* =  \hat{\mat{U}}_n \hat{\mat{S}}_{(n)}^{(n-1)}$ which can be expressed in tensor form as 
\[ \hat{\mat{U}}_n \hat{\mat{S}}_{(n)}^{(n-1)} \quad \Leftrightarrow \quad \hat{\ten{S}}^{(n-1)} \mode{n}\hat{\mat{U}}_n =  {\ten{S}}^{(n)} \mode{1} \hat{\mat{U}}_1 \cdots \mode{n-1} \hat{\mat{U}}_{n-1}\mode{n} \hat{\mat{U}}_n.\] 
Since the intermediate tensors $\ten{S}^{(n)} = \ten{X} \mode{1} \hat{\mat{U}}_1 \cdots \mode{n-1} \hat{\mat{U}}_{n-1}\mode{n} \hat{\mat{U}}_n $,  $\ten{X}^{(n)} $ as defined in the statement of the theorem becomes $\ten{X}^{(n)} =\hat{\ten{S}}^{(n-1)} \mode{n}\hat{\mat{U}}_n$. Next, using the result in~\eqref{eqn:inter3} 
\[  \normf{\ten{E}}^2 \> = \>  \normf{\ten{X} - \ten{G} \mode{1} \mat{C}_1\dots \mode{d}\mat{C}_d }^2  \> \leq \> \sum_{n=1}^d q_n \normf{\ten{X} - \ten{X}^{(n)}}^2.\]
To bound $\normf{\ten{X} - \ten{X}^{(n)}}^2$ we invoke a variation of ~\cite[Theorem 6.4]{vannieuwenhoven2012new} (the proof is identical, except here the summation is restricted to mode-$n$ instead of $d$). 
%To bound $\normf{\ten{X} - \ten{X}^{(k)}}^2$ we apply the matrix Pythagoras theorem  (since $\hat{\mat{U}}_n\hat{\mat{U}}_n^*$ is a projector) to obtain 
%\[ \normf{\ten{X} - \ten{X}^{(k)}}^2 \quad  = \quad \ \sum_{k=1}^{n} \left(\normf{\ten{X}^{(k-1)} -\ten{X}^{(k)}}^2\right) \quad \leq \quad \sum_{k=1}^{n} \left(\normf{\ten{X}^{(k-1)}}^2 -\normf{\ten{X}^{(k)}}^2\right)\]
\end{proof}

The cost of this algorithm is similar to Algorithm~\ref{alg:hoid2}. In addition to the costs listed in Table~\ref{tab:cost2}, the additional cost of applying the randomized ID approach to the set of matrices obtained by unfolding the tensor $\ten{S}^{(n)}$ is \[ \bigO \left( \sum_{n=1}^d\prod_{ j \leq n } r_j \prod_{k \geq n} I_k + r_n^2 I_n\right).\] 
Note that since we are computing the approximate singular pairs arising from the intermediate steps of Algorithm~\ref{alg:sthoid}, the costs for producing the low-rank decomposition is lower than the HOID applied to entire matrix (i.e., Algorithm~\ref{alg:hoid}).

\subsection{Matrix CUR decomposition}
In this section, we consider the special case of Algorithm~\ref{alg:hoid} when the dimensions of the input tensor is restricted to have dimension $2$, i.e., for matrices. 

A matrix CUR factorization to an $m\times n$ matrix produces $r$ columns and rows of $\mat{A}$, expressed as  $\mat{C} \in \mathbb{C}^{m\times r}$ and $\mat{R} \in \mathbb{C}^{r\times n}$, and an intersection matrix $\mat{U} \in \mathbb{C}^{r\times r}$ such that 
  \[ \mat{A} \>  \approx \>  \mat{CUR}. \]
There are several methods available in the literature to obtain a CUR factorization. Here we mention the following references~\cite{voronin2014cur,sorensen2014deim,wang2013improving,drineas2008relative,mahoney2009cur}; however, we emphasize that this list is, by no means, exhaustive.

%Voronin and Martinsson~\cite{voronin2014cur} used a two-sided interpolatory decomposition  to produce a CUR decomposition
% where $\mat{C} \in \mathbb{C}^{m\times c} $ are $c$ columns of $\mat{A}$, $\mat{R} \in \mathbb{C}^{r \times n}$ are $r$ rows of the matrix, and $\mat{U} = \mat{C}^\dagger \mat{A} \mat{R}^\dagger$ is the optimal projected matrix. 

The connection of the HOID with the matrix CUR decomposition is readily established by noting the following identity
\[ \mat{A} = \mat{CUR} + \mat{E} \qquad \Leftrightarrow \qquad \mat{A} = \mat{U} \mode{1} \mat{C} \mode{2} \mat{R} + \mat{E},\]
in which we have used the properties of the mode product defined in Section~\ref{sec:ten}. Here $\mat{C}$ is a matrix which represents the columns of the matrix, while $\mat{R}$ contains sampled rows from the matrix. A typical choice for the intersection matrix $\mat{U}$ is  $\mat{U} =  \mat{C}^\dagger \mat{A} \mat{R}^\dagger$ since this choice of $\mat{U}$ is the minimizer of 
$ \normf{\mat{A} - \mat{CUR}}$ in the Frobenius norm (see~\cite{stewart1999four} and ~\cite[Theorem 2.1]{friedland2007generalized}).  
With this choice of the intersection matrix, the connection with the core tensor computation follows from 
\[ \mat{U}\> =  \>  \mat{C}^\dagger \mat{A} \mat{R}^\dagger \qquad \Leftrightarrow \qquad \mat{U} = \mat{A} \mode{1} \mat{C}^\dagger\mode{2} \mat{R}^\dagger. \]
%Here we make the choice that We emphasize again that because of this connection, several of the results discussed here are relevant to the matrix CUR decomposition as well. %We adopt the same choice of $\mat{U}$ for the tensor case as well since we are considering minimizing the error in the Frobenius norm; other choices of $\mat{U}$ are possible, however. 
Other choice for the intersection matrices are also possible.

It can be readily seen that the results of this paper, applied to input restricted to dimension $2$, produce a matrix CUR decomposition. A special case of Theorem~\ref{thm:error1} for matrix CUR decompositions is summarized in the following corollary. This result appears to be new for matrix CUR decompositions.
\begin{corollary}
Apply Algorithm~\ref{alg:hoid} to matrix $\mat{A}\in \mathbb{C}^{m\times n}$ and rename $[\ten{G};\mat{C}_1,\mat{C}_2]$ as $[\mat{U}; \mat{C}, \mat{R}]$.  We have the following error bound
\[ \normf{\mat{A} -\mat{CUR}}^2 \> \leq q(m,n; r) \left( \sum_{k>r}\sigma_k^2\right),\]
where $q(m,n; r) = 2  + f^2r(m+n-2r)$, $f\geq 1$ is the tolerance parameter in strong RRQR~\cite{gu1996efficient}, and $\sigma_k$ are the singular values of $\mat{A}$.
\end{corollary}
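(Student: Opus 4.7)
The plan is to obtain this corollary as a direct specialization of Theorem~\ref{thm:error1} to the case $d=2$. There are really no new ideas required: the main work is identifying the tensor quantities in Theorem~\ref{thm:error1} with their matrix counterparts and simplifying the factors $q_n$.

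First I would unpack the notation. Treat $\mat{A} \in \mathbb{C}^{m \times n}$ as a second-order tensor $\ten{X}$ with $I_1 = m$ and $I_2 = n$. The two mode unfoldings are $\mat{X}_{(1)} = \mat{A}$ and $\mat{X}_{(2)} = \mat{A}^*$ (up to reshape), which share the same singular values $\sigma_k$. Running Algorithm~\ref{alg:hoid} with $r_1 = r_2 = r$ produces $\mat{C}_1 \in \mathbb{C}^{m \times r}$ (a set of $r$ columns of $\mat{A}$, which we call $\mat{C}$), $\mat{C}_2 \in \mathbb{C}^{n \times r}$ (a set of $r$ columns of $\mat{A}^*$, i.e.\ $\mat{R}^*$ where $\mat{R}$ is $r$ rows of $\mat{A}$), and a core ``tensor'' $\ten{G}$ which in the matrix setting is the intersection matrix $\mat{U} = \mat{C}^\dagger \mat{A} \mat{R}^\dagger$, matching the identification $\mat{A} = \mat{U} \mode{1} \mat{C} \mode{2} \mat{R}$ already laid out just above the corollary.

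Next I would apply Theorem~\ref{thm:error1} directly to this setting. The theorem gives
\[ \normf{\mat{A} - \mat{C}\mat{U}\mat{R}}^2 \;\leq\; q_1 \sum_{k > r} \sigma_k^2(\mat{X}_{(1)}) + q_2 \sum_{k > r} \sigma_k^2(\mat{X}_{(2)}), \]
with $q_n = 1 + f^2 r_n \bigl(\prod_{k \neq n} I_k - r_n\bigr)$. For $d=2$ this simplifies to $q_1 = 1 + f^2 r(n - r)$ and $q_2 = 1 + f^2 r(m - r)$. Since $\sigma_k(\mat{X}_{(1)}) = \sigma_k(\mat{X}_{(2)}) = \sigma_k(\mat{A})$, the two terms combine to give a common tail sum with total prefactor
\[ q_1 + q_2 \;=\; 2 + f^2 r(m + n - 2r) \;=\; q(m,n;r), \]
which is precisely the claimed constant. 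Putting these together yields the inequality in the corollary.

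There is no real obstacle here; the only thing to be careful about is the bookkeeping between the tensor statement (which uses $\prod_{k \neq n} I_k$ inside each $q_n$ and separate Frobenius tails per unfolding) and the matrix statement (where both unfoldings have the same singular values, allowing the two $q_n$'s to be added into a single prefactor). A brief remark noting that when $d=2$ we recover the matrix CUR setting with the identifications above would make the derivation completely transparent.
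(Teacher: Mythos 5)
Your proposal is correct and is exactly the route the paper intends: the corollary is stated as a direct specialization of Theorem~\ref{thm:error1} to $d=2$, and your bookkeeping ($q_1+q_2 = 2+f^2r(m+n-2r)$, both unfoldings sharing the singular values of $\mat{A}$) supplies the details the paper leaves implicit. The only cosmetic quibble is that the mode-$2$ unfolding is the transpose rather than the conjugate transpose of $\mat{A}$, which is immaterial since the singular values agree.
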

\linebreak
Similarly, Theorems~\ref{thm:error2} and~\ref{thm:error3} are applicable to matrices as well. We are exploring these issues in a forthcoming paper. 
\section{Numerical Experiments}\label{sec:res}

To facilitate the comparison between different algorithms we define the following acronyms. The following algorithms act on the entries of the matrix directly to produce a low-rank approximation with rank$-(r_1,\dots,r_d)$:
\begin{enumerate}
\item HOSVD - applies the Higher Order SVD algorithm (Algorithm~\ref{alg:hosvd}) to the entries of the matrix.
\item HOID - applies the Higher Order Interpolatory Decomposition algorithm (Algorithm~\ref{alg:hoid}) and uses either the Pivoted QR labeled `HOID - PQR', or strong rank-revealing QR factorization labeled `HOID - RRQR'. The RRQR was implemented from~\cite[Algorithm 5]{gu1996efficient} with $f=1$.  
\item HORID also applies the Higher Order Interpolatory Decomposition algorithm (Algorithm~\ref{alg:hoid}). The interpolative decomposition was computed by using the package \verb|scipy.linalg.interpolative|. Details can be found in the documentation~\cite{martinsson2014id}. 
\item ST-HOID  applies the Sequentially Truncated Higher Order Interpolatory Decomposition algorithm (Algorithm~\ref{alg:sthoid})
\end{enumerate}•

 The following algorithms assume that a low-rank decomposition that has already been computed, and use this representation along with the entries of the tensor $\ten{X}$ to produce a low-rank approximation with rank$-(r_1,\dots,r_d)$:
\begin{enumerate}
\item RRQR - implements Algorithm~\ref{alg:hoid2} with column subset selection implemented using the strong RRQR algorithm~\cite[Algorithm 5]{gu1996efficient}.
\item PQR - implements Algorithm~\ref{alg:hoid2} with column subset selection implemented using Pivoted QR (see also Appendix~\ref{sec:deim}).
\item DEIM - implements Algorithm~\ref{alg:hoid2} with column subset selection implemented using DEIM Algorithm~\ref{alg:deim} summarized in Appendix~\ref{sec:deim}
\item Lev - implements Simple-Leverage - the subset selection using leverage score sampling, summarized in Appendix~\ref{sec:lev}. 
\end{enumerate}•
As a way to compare the relative performance between the above algorithms, in addition to computing the overall accuracy of the HOID representation, we also compare the error constants $\normtwo{(\mat{V}^*\mat{P})^{-1}}$ that appear in Lemma~\ref{lemma:proj} and Theorem~\ref{thm:error2}. The details of the DEIM algorithm and Leverage score based sampling have been provided in the Appendices~\ref{sec:deim} and~\ref{sec:lev}.  However, we would like to point out that the \textit{a priori} bounds obtained using RRQR are much better than DEIM, and Pivoted QR.  In practice, the results of PQR and RRQR are identical; however, the bounds for RRQR are much better and it does not experience an exponential growth for adversarial cases like the Kahan matrix~\cite{gu1996efficient}. The algorithms were all implemented in Python and all the timing results were run on an iMac desktop with 3.5 GHz i7 processor and 32 GB in memory.

\subsection{Example 1}
The following example arises from the numerical solution of integral equations.  We consider a tensor $\ten{X}$ with the entries
 \begin{equation}\label{eqn:hilb} \ten{X}_{i_1,\dots,i_d} \> = \> \frac{1}{\sqrt{i_1^2 + \dots + i_d^2} }\qquad  1\leq i_1,\dots, i_d \leq N.\end{equation}
 The above array $\ten{X}$ is obtained from a Fredholm integral equation with kernel $1/ ￼\| x- y \| $ acting on a unit hypercube and is discretized by the {N}ystr{\"o}m method on a uniform grid. The advantage of using this tensor is that an approximation in CP format exists with low-rank, i.e. has rank $r$ satisfying the estimate
 \[ r \leq C\log N \log^2 \frac{1}{\varepsilon}, \]
 where $\varepsilon$ is the relative error of the desired low-rank approximation~\cite{oseledets2008tucker}. In our examples, we choose the dimension $d=3$.

 \begin{figure}[!ht]\centering
\includegraphics[scale=0.3]{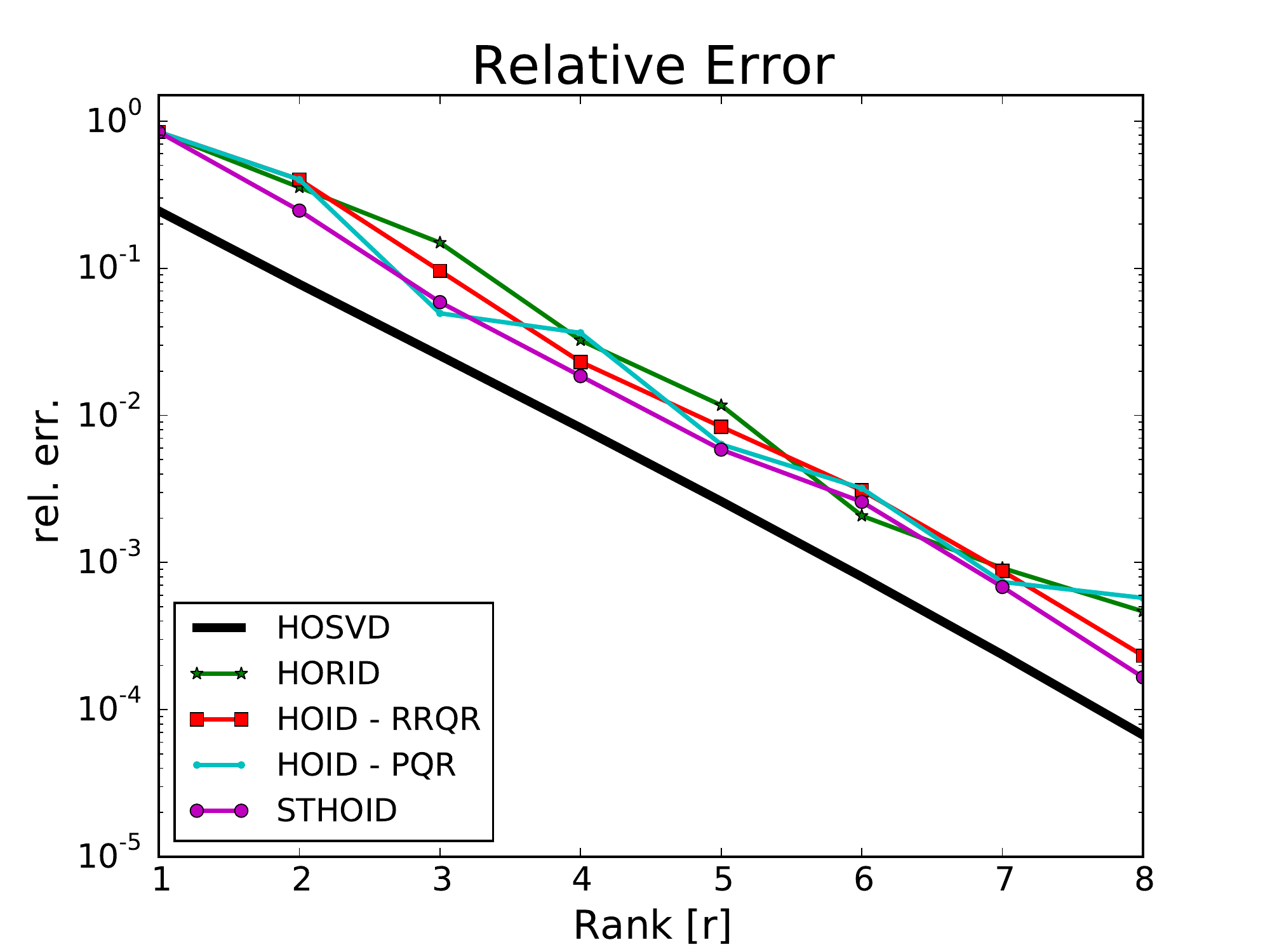}
\includegraphics[scale=0.3]{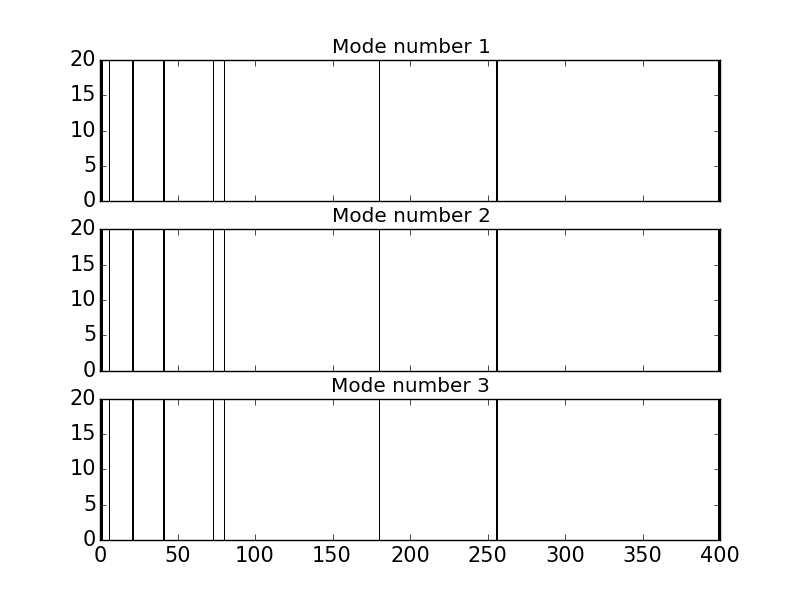}

\caption{(left) The relative error in the computation of a rank$-(r,r,r)$ approximation to the tensor defined in~\eqref{eqn:hilb}. The definitions of the algorithms used are provided at the start of Section~\ref{sec:res}. (right) The indices that has been selected using the HOID (with RRQR for subset selection) applied on the tensor $\ten{X}$ defined in~\eqref{eqn:hilb}.   }

\label{fig:relerr_hilbert}
\end{figure}

 \begin{figure}[!ht]
\centering
\includegraphics[scale=0.3]{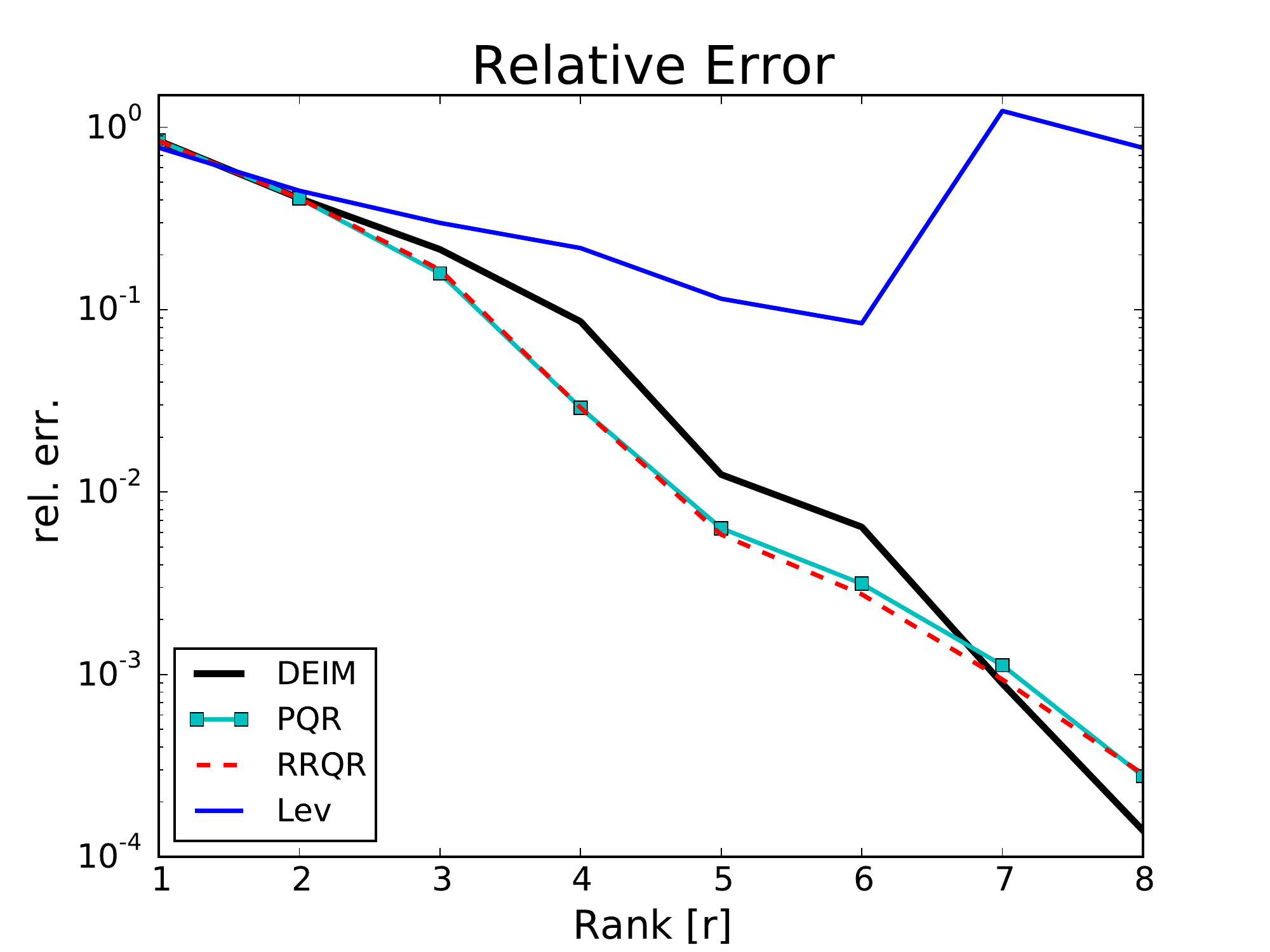}
\includegraphics[scale=0.3]{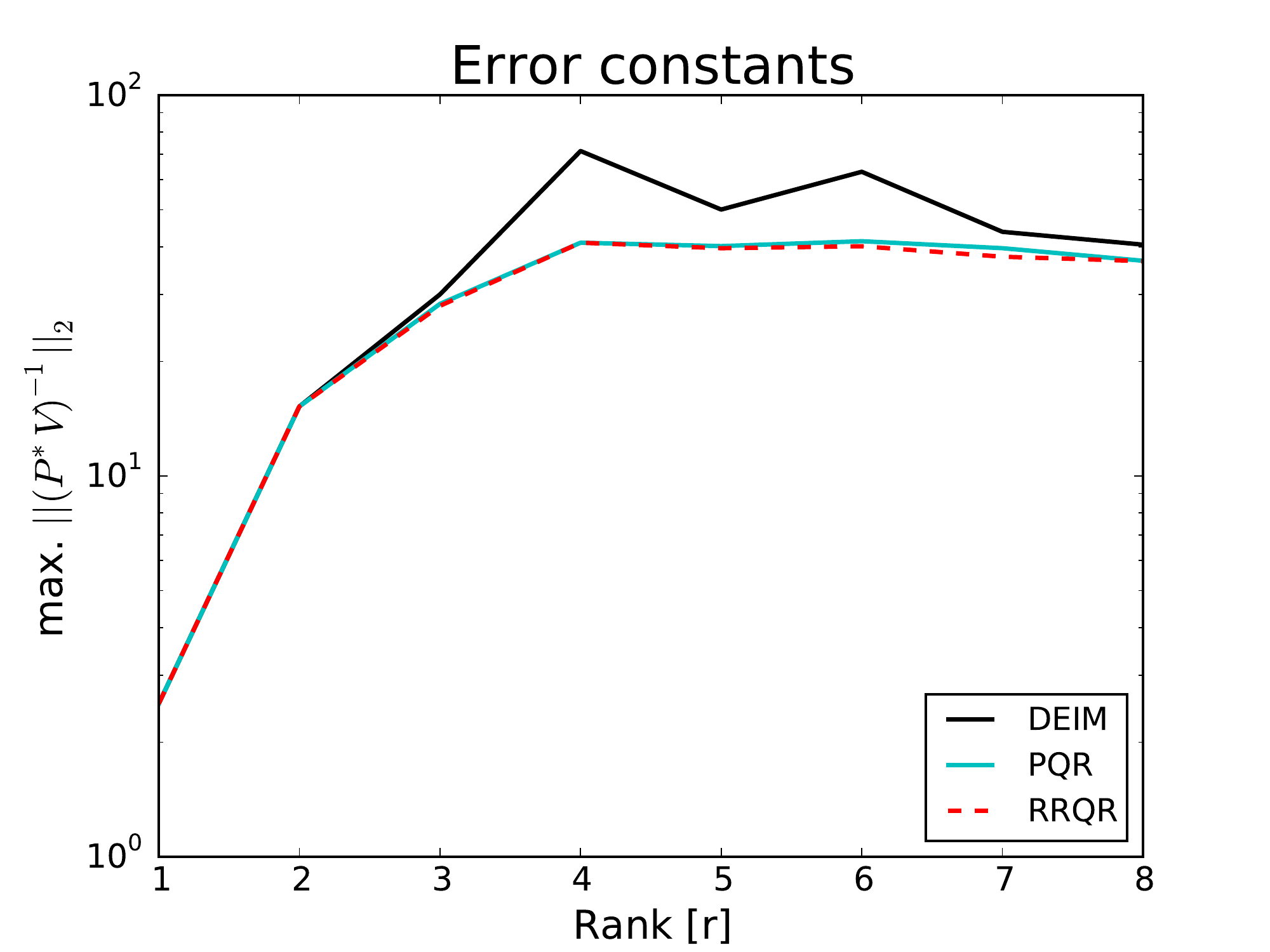}
\caption{(left) The relative error in the computation of a rank$-(r,\dots,r)$ approximation to tensor defined in~\eqref{eqn:hilb}, starting with the rank-$(r,r,r)$ approximation using  the HOSVD algorithm. The definitions of the algorithms used are provided at the start of Section~\ref{sec:res}. (right) The error constants $\max_{n=1,\dots,d}\normtwo{(\mat{P}_n^*\mat{V}_n)^{-1}}$ computed using DEIM and RRQR approaches are compared. See also Figure~\ref{fig:err_lev_hilbert} for comparison with leverage score calculations using exact singular vectors. }
\label{fig:deim_hilbert}
\end{figure}•

\begin{figure}\centering
\includegraphics[scale=0.45]{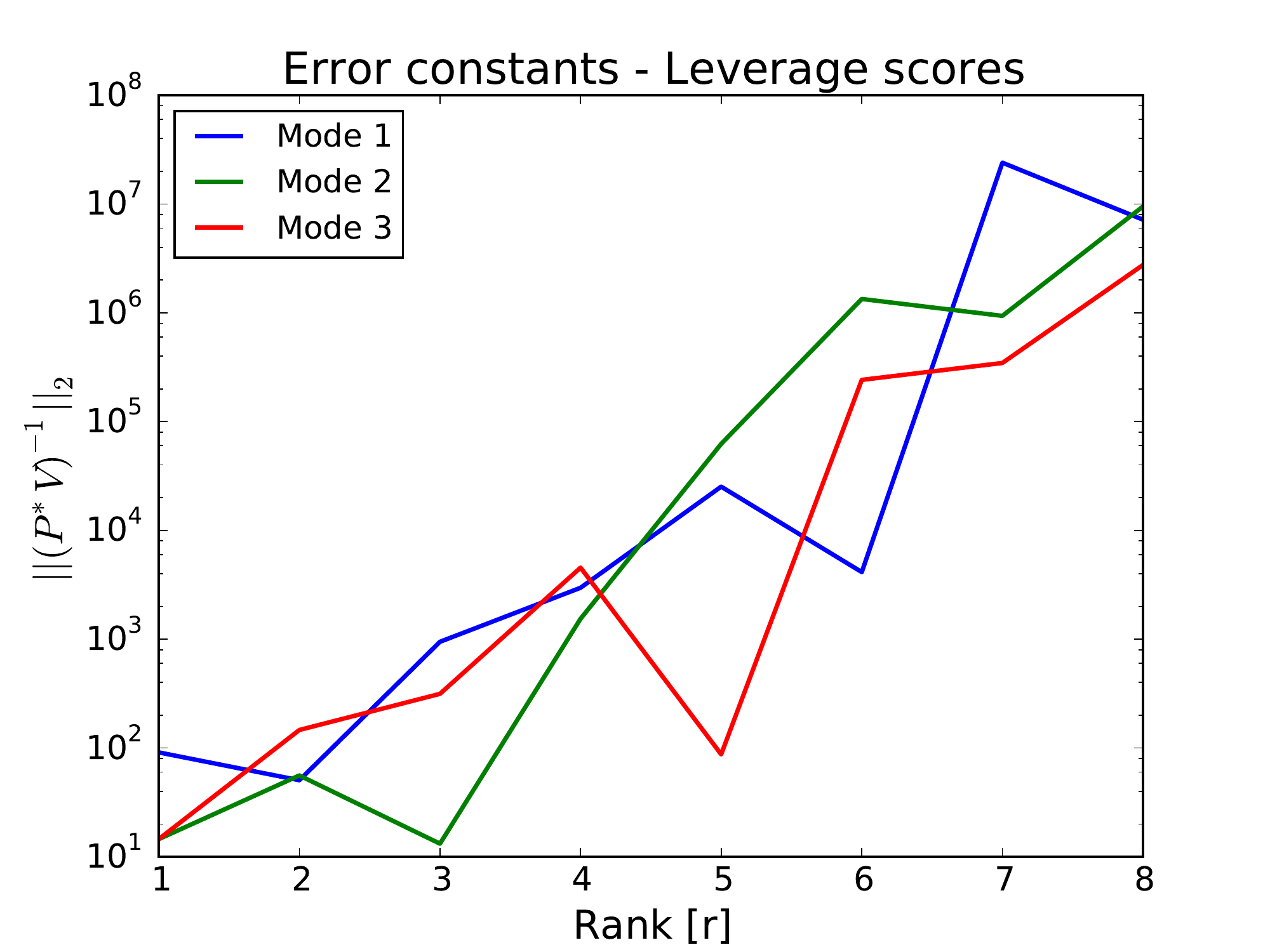}
\caption{Error constants $\| (\mat{P}^*_n\mat{V}_n)^{-1}\|_2$ for each mode computed using the leverage scores obtained from all the exact singular vectors.}
\label{fig:err_lev_hilbert}
\end{figure}

 We perform two sets of experiments on each tensor. First, we fix $N = 50$. In the first set of experiments, we compute a rank-$(r,r,r)$ decomposition using HOSVD, HOID (both PQR and RRQR),  HORID, and ST-HOID methods.  The relative error measured as $\normf{\ten{X} - \ten{X}_r}/ \normf{\ten{X}}$ is plotted as a function of the individual mode rank $r$. The results are visualized on the left part of Figure~\ref{fig:relerr_hilbert}. As can be seen the error from all the algorithms are comparable, with the error in the HOSVD algorithm being consistently lowest. In the case of matrices, SVD has the optimal accuracy for a rank-$r$ approximation; however this result is no longer the case in tensor decompositions. We also observe that the error in the HOID and HORID algorithms are similar and  only slightly higher than the HOSVD algorithm; the error is quantified by the result of Theorem~\ref{thm:error1}. Comparatively, HORID algorithm is much cheaper to compute than HOID and HOSVD. The column indices selected using the HOID algorithm have been visualized in the right part of Figure~\ref{fig:relerr_hilbert} (for visualization purpose, the indices corresponding to $N=20$ are plotted). Since all the mode$-n$ unfoldings are identical, the HOID algorithm using RRQR is deterministic and same indices are picked from each mode.  In the second set of experiments, we assume that a rank-$(r,r,r)$ decomposition has been computed using the HOSVD algorithm. We then convert it into an equivalent HOID using  DEIM, PQR and RRQR. The results are visualized in Figure~\ref{fig:deim_hilbert}. Again we observe that the three algorithms have similar performance and the error is only slightly higher compared to the HOSVD algorithm.  On the right side of the figure, we plot the error constant $\max_{n=1,\dots,d}\normtwo{(\mat{P}_n^*\mat{V}_n)^{-1}}$ as a function of the rank of the individual mode unfoldings. As can be seen, as $r$ increases, the growth in the error constants from RRQR and PQR is lower than that of DEIM but within an order of magnitude of each other. The result in the left part of Figure~\ref{fig:deim_hilbert} also shows the comparison with the Simple-Leverage method. The error in the low-rank representation seems to worsen as the requested rank-$(r,r,r)$ increases; this can be seen from the growth of the error constant $\| (\mat{P}^*_n\mat{V}_n)^{-1} \|_2$ which has been computed for each mode, see Figure~\ref{fig:err_lev_hilbert}. Even the use of exact singular vectors to compute the leverage scores does not to seem to yield an accurate HOID; in practice, note that the exact leverage scores may not be available.

\subsection{Example 2}
Following the work in~\cite{sorensen2014deim}, we construct a tensor of dimensions $\mathbb{R}^{n\times n \times n} $ in the CP format 
\begin{equation}\label{eqn:sparse}
\ten{X} =  \sum_{j=1}^{10}\frac{1000}{j} \vec{x}_j \circ \vec{y}_j \circ\vec{z}_j +  \sum_{j=11}^n\frac{1}{j} \vec{x}_j \circ \vec{y}_j \circ \vec{z}_j,
\end{equation}•
where $\vec{x}_j,\vec{y}_j,\vec{z}_j \in \mathbb{R}^{n}$ are sparse vectors with nonnegative entries.  Note that the individual vectors  are not orthonormal and there is no direct analogy with the matrix SVD in terms of a jump in the singular values. We still expect that the importance of each term in the outer product representation is decreasing with a sharp jump between terms $10$ and $11$. A HOID is relevant here because the tensor is sparse and the entries are nonnegative and we would like to preserve this structure in the column matrices $\{ \mat{C}_k\}_{k=1}^3$. 

 \begin{figure}[!ht]\centering
\includegraphics[scale=0.3]{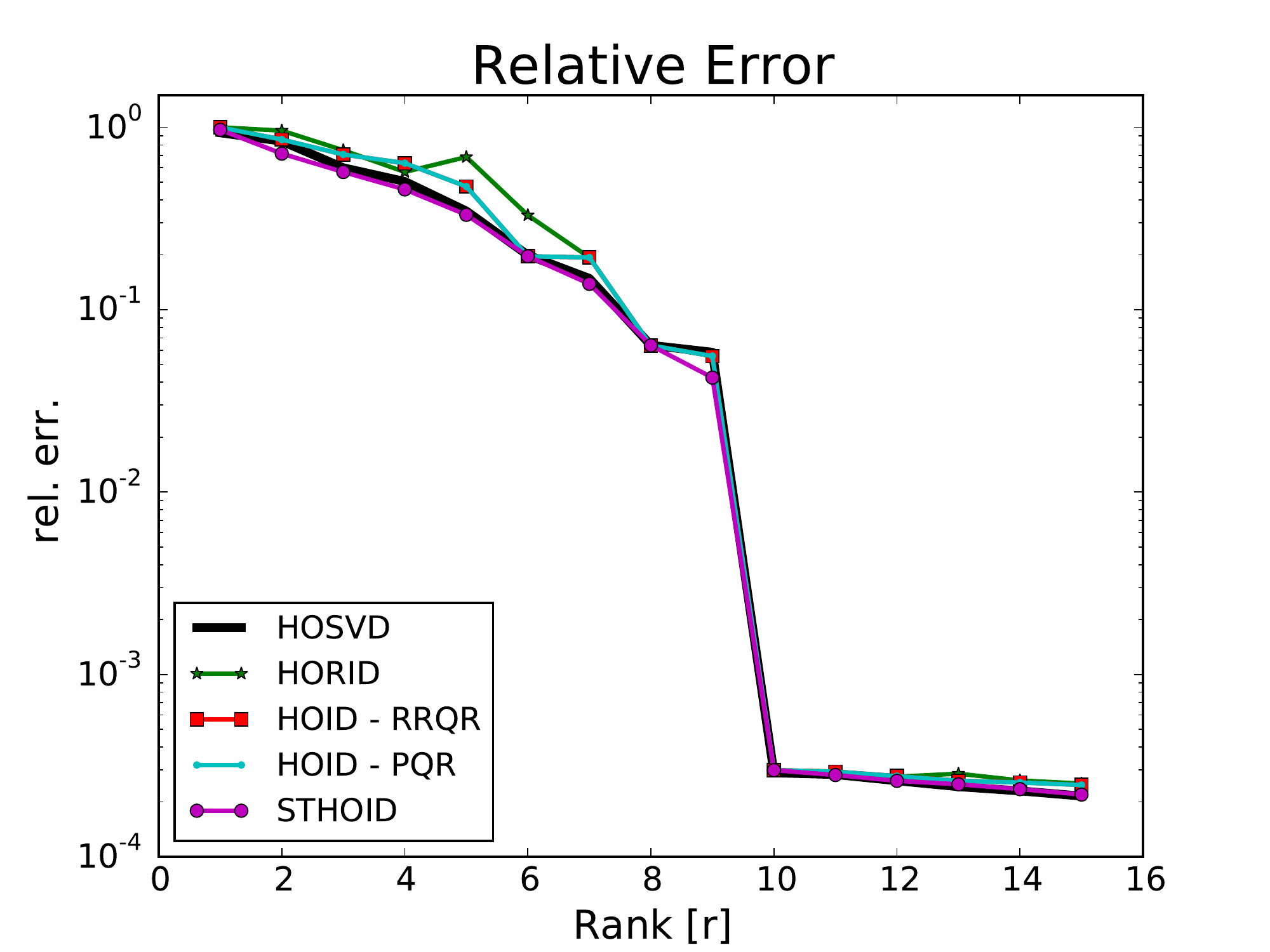}
\includegraphics[scale=0.3]{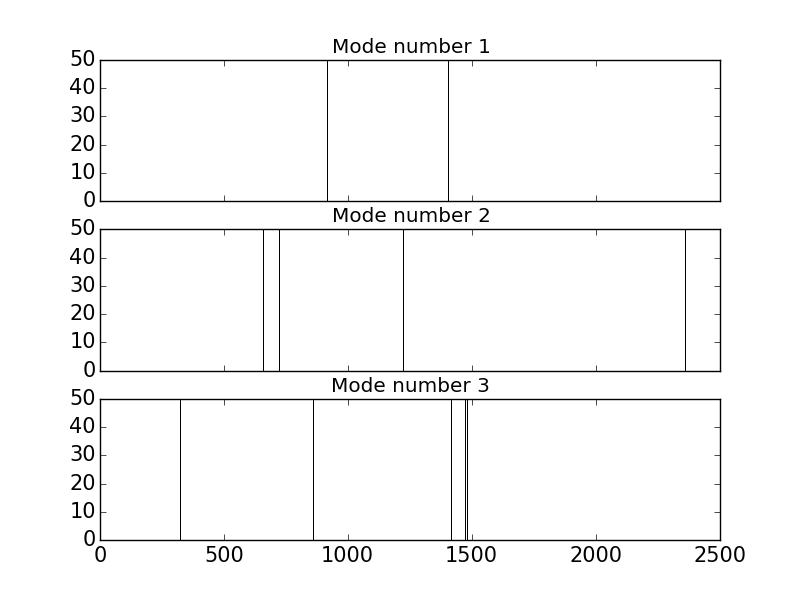}

\caption{(left) The relative error in the computation of a rank$-(r,r,r)$ approximation to the tensor defined in~\eqref{eqn:sparse}. The definitions of the algorithms used are provided at the start of Section~\ref{sec:res}. (right) The indices have been selected using the HOID (with RRQR for subset selection) applied on the tensor $\ten{X}$ defined in~\eqref{eqn:hilb}.  }
\label{fig:relerr_sparse}
\end{figure}

 \begin{figure}[!ht]
\centering
\includegraphics[scale=0.3]{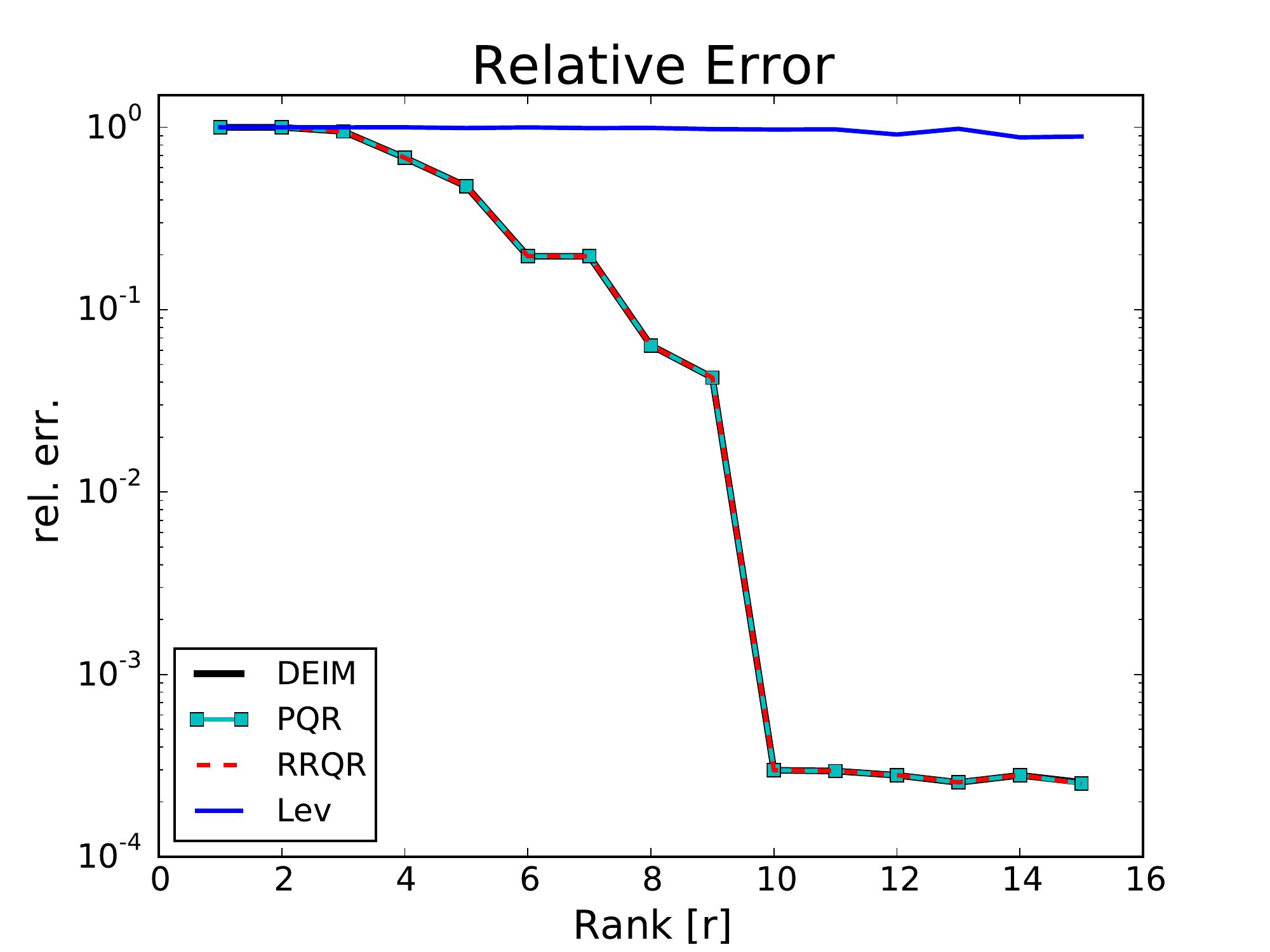}
\includegraphics[scale=0.3]{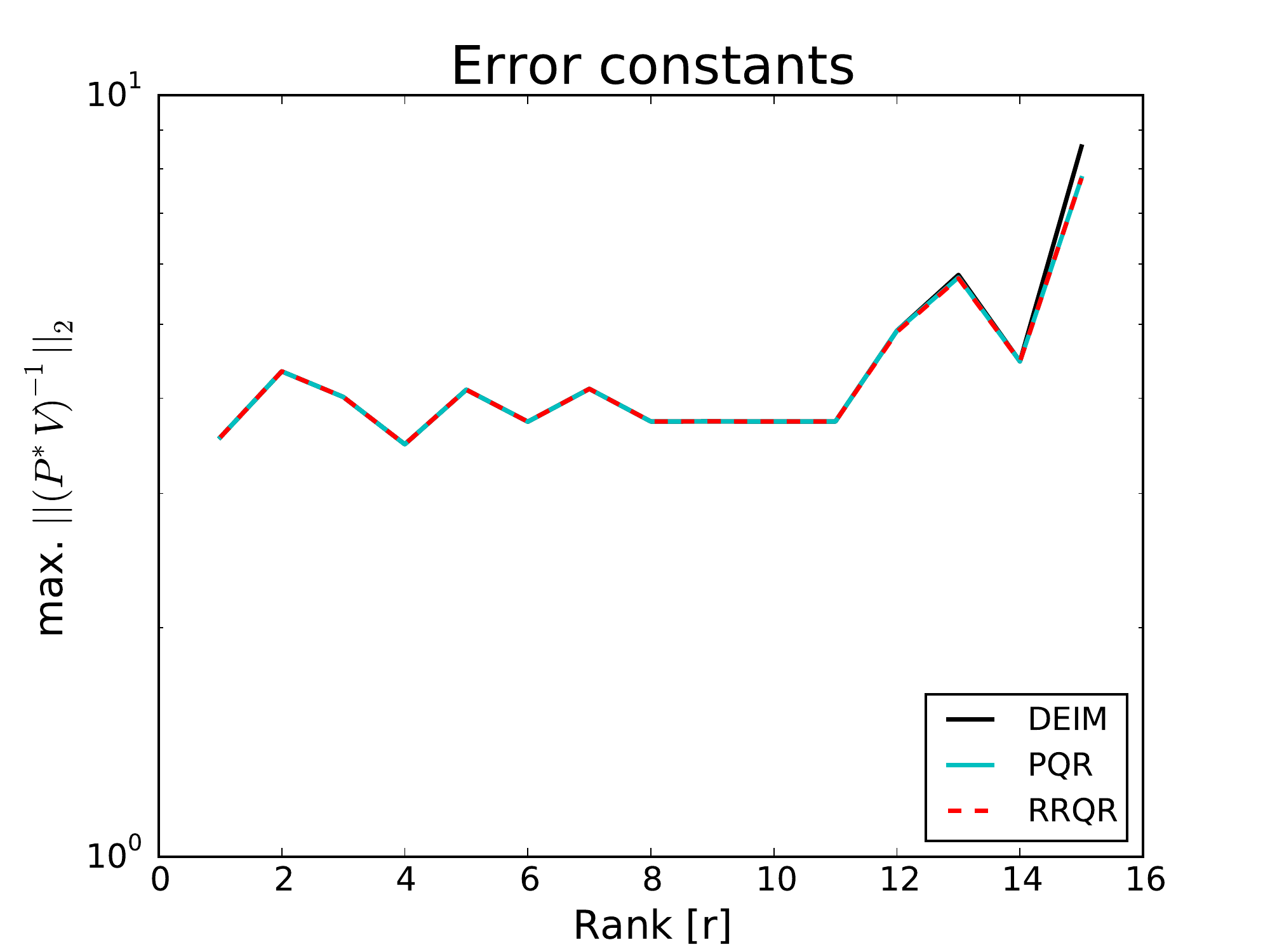}
\caption{(left) The relative error in the computation of a rank$-(r,r,r)$ approximation to tensor defined in~\eqref{eqn:sparse}, starting with the rank-$(r,r,r)$ approximation using  the HOSVD algorithm. The definitions of the algorithms used are provided at the start of Section~\ref{sec:res}. (right) The amplification factor $\max_{n=1,\dots,d}\normtwo{(\mat{P}_n^*\mat{V}_n)^{-1}}$ computed using DEIM and RRQR approach are compared. See also Figure~\ref{fig:err_lev_sparse} for comparison with leverage score calculations. }
\label{fig:deim_sparse}
\end{figure}•

We perform the same set of experiments as we did with Example 1. The results are visualized in Figures~\ref{fig:relerr_sparse},~\ref{fig:deim_sparse} and our conclusions are similar. The result in the left part of Figure~\ref{fig:deim_sparse} also shows the error of the low-rank representation using the Simple-Leverage method. As is seen from the figure, the error is quite high compared to that obtained from DEIM, PQR and RRQR. The magnitude of the error constant $\| (\mat{P}^*_n\mat{V}_n)^{-1} \|_2$, computed for each mode, is plotted in Figure~\ref{fig:err_lev_sparse}. The indices were extracted  using the leverage scores method and the leverage scores were computed using the right singular vectors (corresponding to the top $15$ singular values). For comparison, we also plot the error constants $\| (\mat{P}^*_n\mat{V}_n)^{-1} \|_2$ from the  leverage scores method;  all the right singular vectors (for each mode) were used in the computation of the leverage scores. The results of the sampling depend on the sensitivity of the leverage scores;  for a detailed discussion, see~\cite{ipsen2014sensitivity}.

\begin{figure}\centering
\includegraphics[scale=0.3]{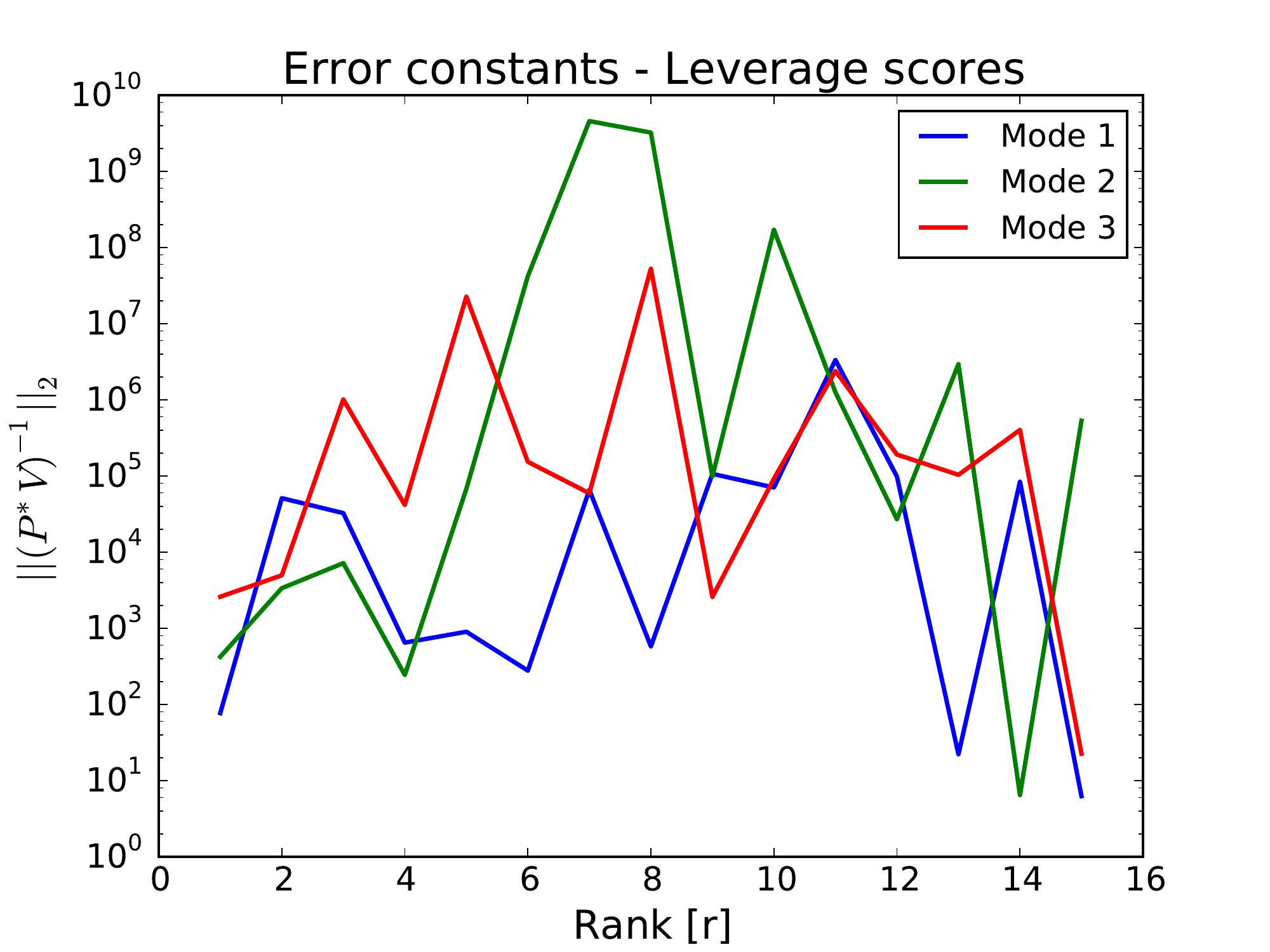}
\includegraphics[scale=0.3]{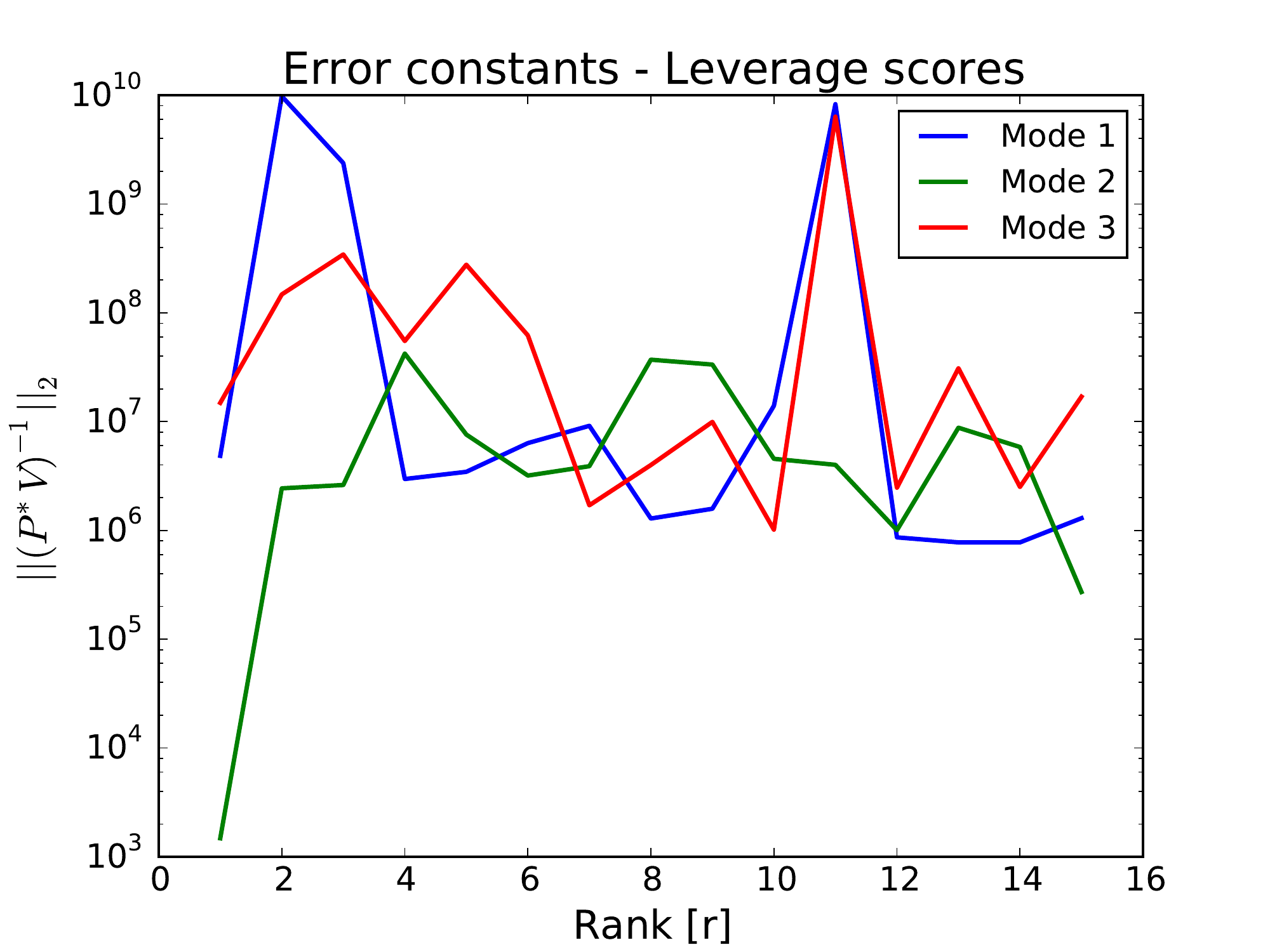}
\caption{Error constants $\| (\mat{P}^*_n\mat{V}_n)^{-1} \|_2$ for each mode computed using the leverage scores (left) using only right singular vectors corresponding to top $15$ singular values, (right) retaining all the right singular vectors.}
\label{fig:err_lev_sparse}
\end{figure}

\subsection{Application: Handwriting classification}
As our first application, we consider the  classification of handwritten digits using the HOSVD, which was investigated by Savas and Eld\'en in~\cite{savas2007handwritten}. Gray-scale images of handwritten digits from $0-9$ are available in $10$ classes, as a training data set; given a new image representing a handwritten digit, the challenge is to assign it a label from $0-9$. In~\cite{savas2007handwritten}, a HOSVD-based algorithm is presented. Here we compare the performance of the HOID and ST-HOID against the HOSVD algorithm.  

For the training and test images, we use the MNIST database which contains $60,000$ training images and $10,000$ test images with $28 \times 28$ pixels in $8$-bit gray-scale. The training images are unequally distributed over the ten classes; to keep the same number over all the digits we restricted the number of training images in every class to $5421$. Another possibility is to duplicate some of the images across the digits. The images are organized into a tensor of size $784 \times 5421 \times 10$ with the first dimension representing the pixels, the second dimension representing the images, and the third dimension representing the digits. 

The classification strategy in~\cite{savas2007handwritten}, relied on the HOSVD. For the classification using the HOID representation, we develop a new strategy here. Let us denote the tensor by $\ten{X}$. In the training phase, we approximate it by a HOID decomposition as $\ten{X} \approx \ten{G} \mode{1} \mat{C}_p \mode{2} \mat{C}_i \mode{3}\mat{C}_d$ where the core tensor is truncated to have size $(62,142,10)$; the sizes of $\mat{C}_p,\mat{C}_i,\mat{C}_d$ are determined appropriately. Next, we compute $\ten{F}  = \ten{G} \mode{1} \mat{C}_p^\dagger \mode{2} \mat{C}_i^\dagger$.  The columns of $\mat{F}^\nu  := \ten{F} (:, :, \nu)$ represent basis vectors for some class given by $\nu$ ranging from $1-10$ representing the digits. Next, an orthonormal basis is computed for each class $\nu$ by retaining the left singular vectors $\mat{U}^\nu$  corresponding to the top $k$ singular values of $\mat{F}^\nu$. In the test phase, given an image $\mat{D}$ we compute $\vec{d} = \text{vec}(\mat{D})$ by an unfolding operation. Next, the image is projected onto the column space of the basis $\mat{C}_p$ by $\vec{d}_p := \mat{C}_p^\dagger \vec{d}$. This low-parameter approximation of $\vec{d}$ is projected orthogonally onto the space spanned by the basis matrix $\mat{U}^\nu$ and its residual is $r(\vec{d}_p,\nu):=\| \left(\mat{I}- \mat{U}^\nu(\mat{U}^\nu)^T\right)\vec{d}_p \|_F$. The image is then assigned a label computed by the minimizer $\argmin_\nu r(\vec{d}_p , \nu)$.

\begin{table}\centering
\begin{tabular}{lccc}
•  & HOSVD & HOID-PQR & ST-HOID \\ \hline
Training time [s] & $123.09$ & $71.48$ & $19.97$ \\
Relative Model error & $0.41$& $0.56$ & $0.54$ \\ \hline
Classification time [s] & $0.67$ & $0.99$ & $0.99$ \\
Basis size $k $ & 15 & $30$ & 30 \\  
Classification Accuracy $\%$ & $95.31$ & $92.01$ & $91.99$ \\\hline 
\end{tabular}•
\caption{Comparison between various algorithms described at the beginning of Section~\ref{sec:res} on the handwriting dataset.  `Training time' refers to the CPU time (measured in seconds) to build the respective low-rank factorizations, `Classification time' refers to projecting the image onto the column space $\mat{C}_p$ and minimizing the residual, to find the closest digit in the database. }
\label{tab:handwriting}
\end{table}•

We compare three different algorithms: HOSVD (which was proposed in~\cite{savas2007handwritten}), HOID algorithm (Algorithm~\ref{alg:hoid}) with Pivoted QR to compute the ID along the modes, and ST-HOID (Algorithm~\ref{alg:sthoid}). Here we choose to use the Pivoted QR algorithm since its performance is empirically comparable to the RRQR. Moreover, PQR has been implemented in LAPACK (xGEPQ3)  and therefore, provides fair timing comparison against the SVD algorithm. From the results in Table~\ref{tab:handwriting}, it can be readily seen that both HOID-PQR and ST-HOID algorithms are considerably faster than the HOSVD algorithm in terms of the training time. The core tensor is truncated to have size $(62,142,10)$, retaining only approximately $0.2\%$ of the entries of the training data set. The size of the basis $k$ used to represent the column space of $\mat{F}^\nu$ was chosen to be $15$ for the HOSVD  based on the numerical experiments in~\cite{savas2007handwritten} and $30$ for both HOID-PQR and ST-HOID based on our numerical experiments. Since a large basis is used, this results in higher classification time for these algorithms. Furthermore, clearly HOSVD has the best classification accuracy; however, the classification accuracy of HOID-PQR and ST-HOID is comparable but are much cheaper to compute. Moreover, the advantage of HOID algorithms is that, it provides a better interpretation over HOSVD.

\subsection{Application: Time-dependent inverse problems}
Motivation: Time dependent inverse problems involve a time dependent parabolic PDE (with appropriate boundary conditions) of the form
\[ \mathcal{S}(\vec{x}; p) \frac{\partial \phi}{\partial t}  - \mathcal{A}(\vec{x}; p)\phi = \delta(\vec{x} - \vec{x}_s) q(t) \qquad \vec{x} \in \Omega, \]
where $\mathcal{S}(\vec{x}; p)$ and $\mathcal{A}(\vec{x}; p)$ are time-independent operators, $\Omega$ is the imaging domain of interest, $p$ is a parameter of interest (possibly infinite dimensional) and $\vec{x}_s$ is the source location. The domain of interest is ``excited'' at several source locations denoted by $\vec{x}_s$ and is represented by the forcing term $\delta(\vec{x} - \vec{x}_s)q(t)$. The response is collected at several measurement locations $\vec{x}_r$ also located in the domain. The measurements, collected at several measurement locations at multiple times due to excitation from multiple sources, are then used to recover the spatial parameters represented by $p$.  This formulation encompasses several well-known imaging modalities -- Diffuse Optical Tomography, Transient Hydraulic Tomography, Electromagnetic inversion, etc. For example, in Transient Hydraulic Tomography $\mathcal{S}(\vec{x};p)$ is the specific storage, and $\mathcal{A}(\vec{x}; p) \define \nabla \cdot(\kappa(\vec{x}; p) \nabla)$, where $\kappa(\vec{x}; p)$ is hydraulic conductivity. The inversion methodology requires discrete measurements of $\phi(\vec{x})$ in space and time to reconstruct specific storage and hydraulic conductivity. The collected data can be expressed as the result of the Green's function $G(\vec{x}_s, \vec{y}_r; t)$ where $\vec{x}_s$ is the source location, $\vec{y}_r$ is the receiver location and $t$ is the time at which the measurement is collected.

\begin{figure}[!ht]\centering
\includegraphics[scale=0.3]{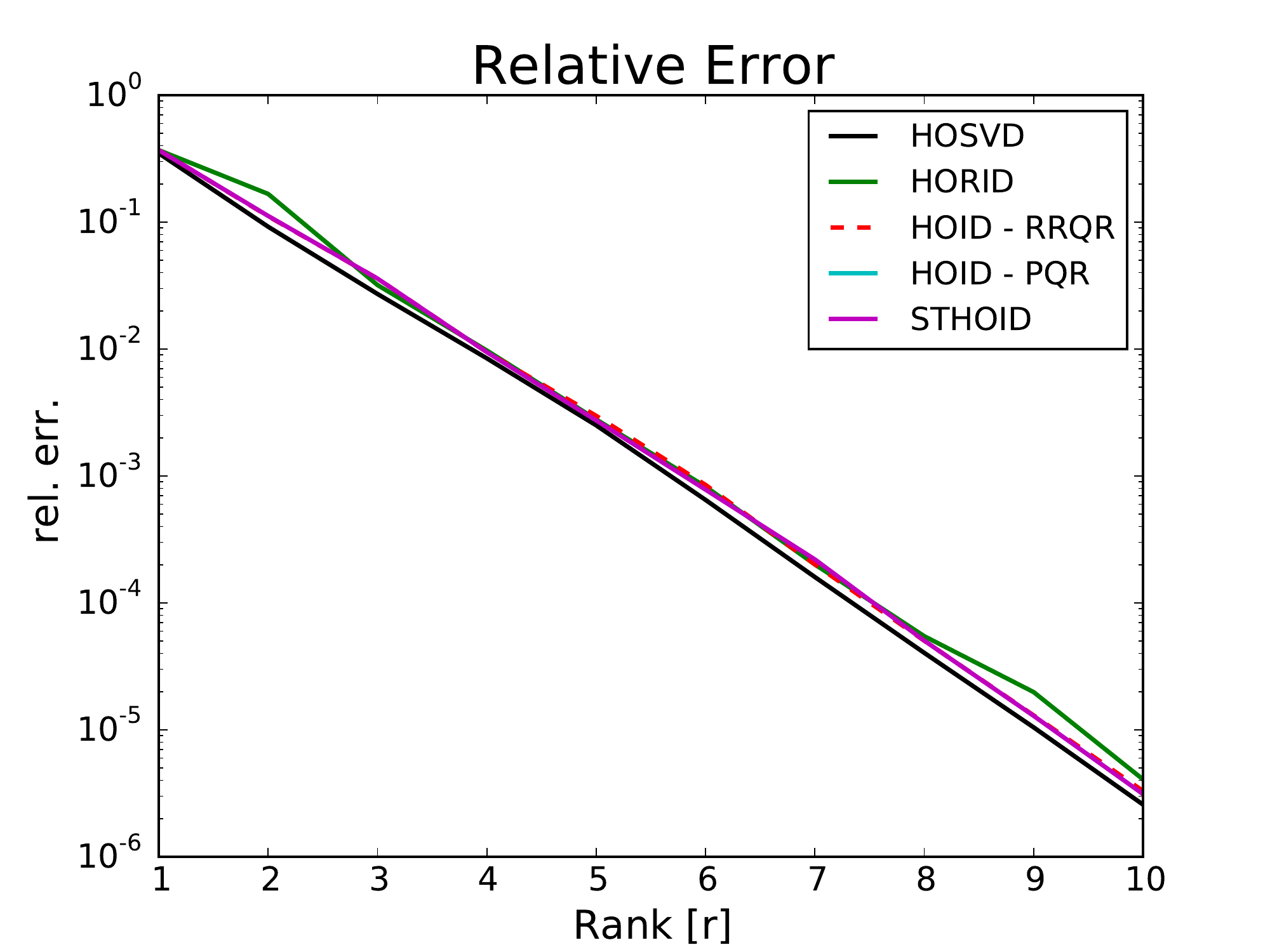}
\includegraphics[scale=0.3]{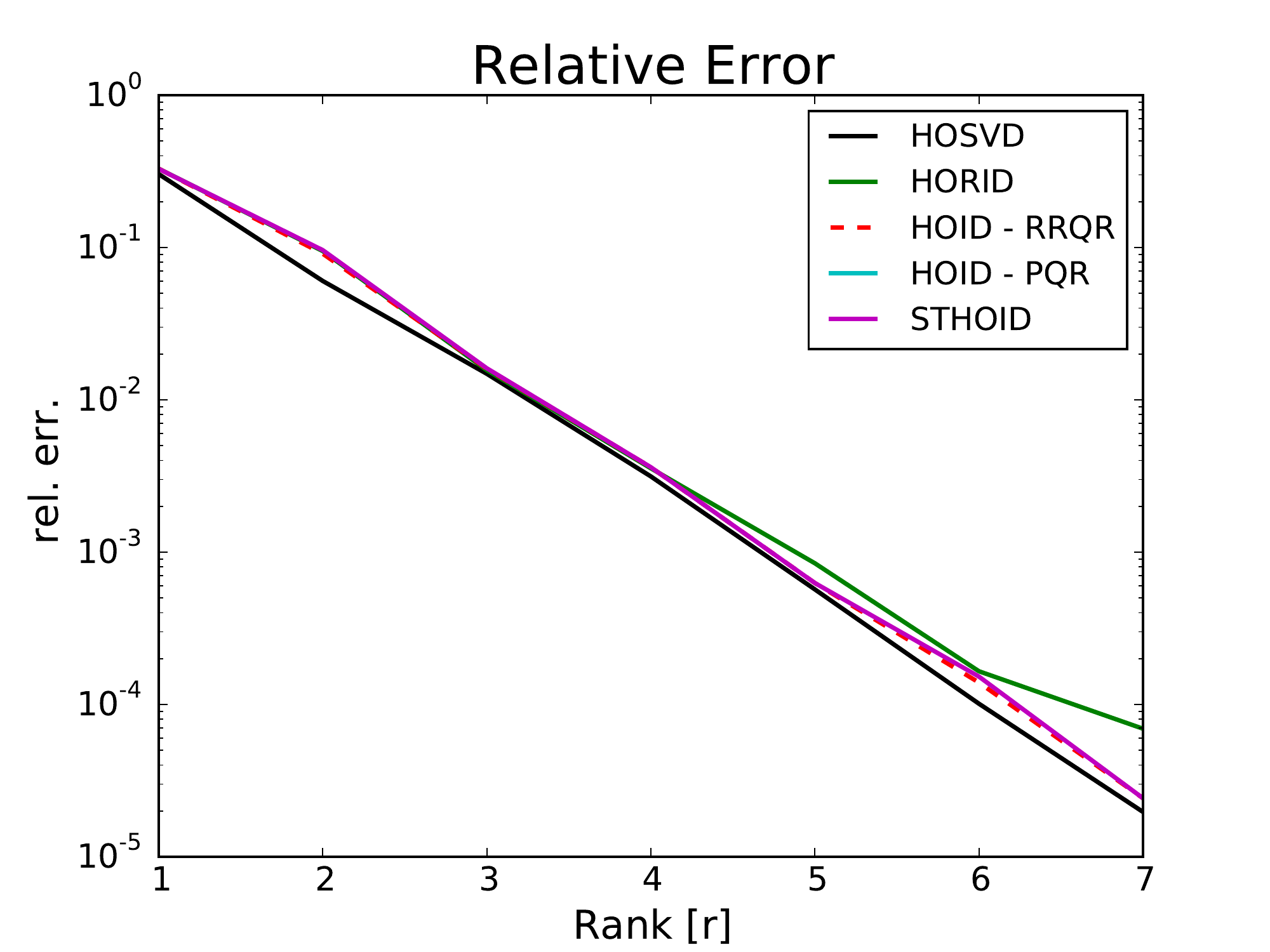}

\caption{ The relative error in the computation of a rank--$(r,\dots,r)$ approximation to the tensors $\ten{X}_3$ (left) and $\ten{X}_5$ (right) defined in~\eqref{eqn:3d5d}. The results are compared for HOSVD, HORID and HOID. As can be seen the approximation errors are comparable for all three algorithms. }
\label{fig:relerr_imaging}
\end{figure}

The data can be viewed as a 3D tensor or a 5D tensor, with respective dimensions
\begin{equation}\label{eqn:3d5d}
 \ten{X}_3 \in \mathbb{R}^{N_s \times N_r \times N_t} \qquad \ten{X}_5 \in \mathbb{R}^{N_{s,x} \times N_{s,y} \times N_{r,x} \times N_{r,y} \times N_t},
\end{equation}• 
where $N_s = N_{s,x} \times N_{s,y}$ is the number of sources (as a product of number of sources in x- and y- directions, respectively), $N_r = N_{r,x} \times N_{r,y}$ is the number of receivers and $N_t$ of time steps at which data is collected at.  When the source and receiver locations are non-overlapping and well-separated, both tensors $\ten{X}_3$ and $\ten{X}_5$ are accurately represented by $(r,r,r)$ or $(r,r,r,r,r)$ dimensional approximation, respectively and the accuracy improves considerably as $r$ increases. Collecting this large dataset can be difficult in many applications, since it requires repeated experiments which are tedious and time-consuming. With the knowledge that the tensor is low-rank, not all entries need be computed and we can use tensor completion algorithms~\cite{liu2013tensor} to acquire all the data. The HOID decomposition is relevant for this application, since it could provide insight into which columns of the mode unfoldings can be sampled which are representative of the data. The columns of the mode unfoldings have the following interpretation - they indicate the appropriate and relevant sources, receivers, or time points. To illustrate the compressibility of the tensor, consider the simple example of the parameters $\mathcal{S} = 1$ and $\mathcal{A} = \Delta$, and $q(t) = 1$ then the Green's function is given by the heat kernel,
\[ G(\vec{x}_s, \vec{y}_r; t) = \frac{1}{(4\pi t )^{d/2}}\exp\left(-\frac{\|\vec{x}_r-\vec{y}_r \|^2}{4t}\right). \]

  The sources are taken to be in the $z = 2$ plane and are evenly spaced between $[-1,1]\times [-1,1]$ and $N_s = 20 \times 20 = 400$. The receivers are co-located with the sources on the plane $z=0$. An image acquisition of the following type is relevant in transmission type geometries, as illustrated by the application~\cite{saibaba2015fast}. The data is also acquired in the time interval $[0.1,1.1]$ with $N_t = 20$ time points. The data can be considered as a 3D tensor or a 5D array both containing $3.2$ million entries. As before we apply HOSVD, HORID and HOID algorithms on both tensors. The results are displayed in Figure~\ref{fig:relerr_imaging}; as can be seen, all three algorithms provide comparable accuracies but HOID and HORID are useful since they provide better interpretations, namely that they indicate which sources, receivers and time points are important. Next, we take an approximation provided by HOSVD, to convert into an equivalent HOID using the DEIM and RRQR strategy for subset selection. The results are viewed in Figure~\ref{fig:deim_imaging_3d} for $\ten{X}_3$ and Figure~\ref{fig:deim_imaging_5d} for $\ten{X}_5$. As can be seen the results of DEIM and RRQR are comparable, but RRQR has the slight edge.

 \begin{figure}[!ht]
\centering
\includegraphics[scale=0.3]{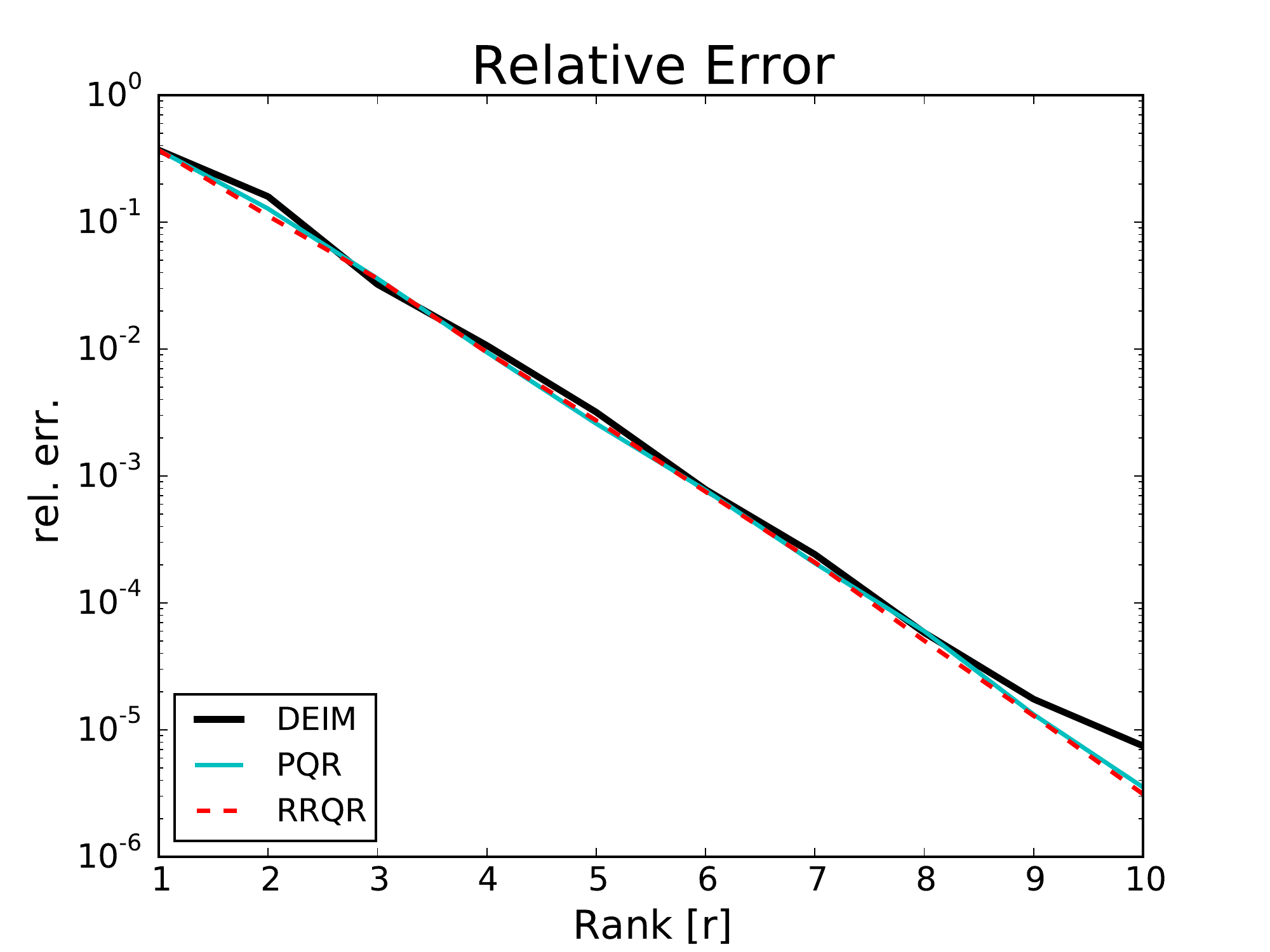}
\includegraphics[scale=0.3]{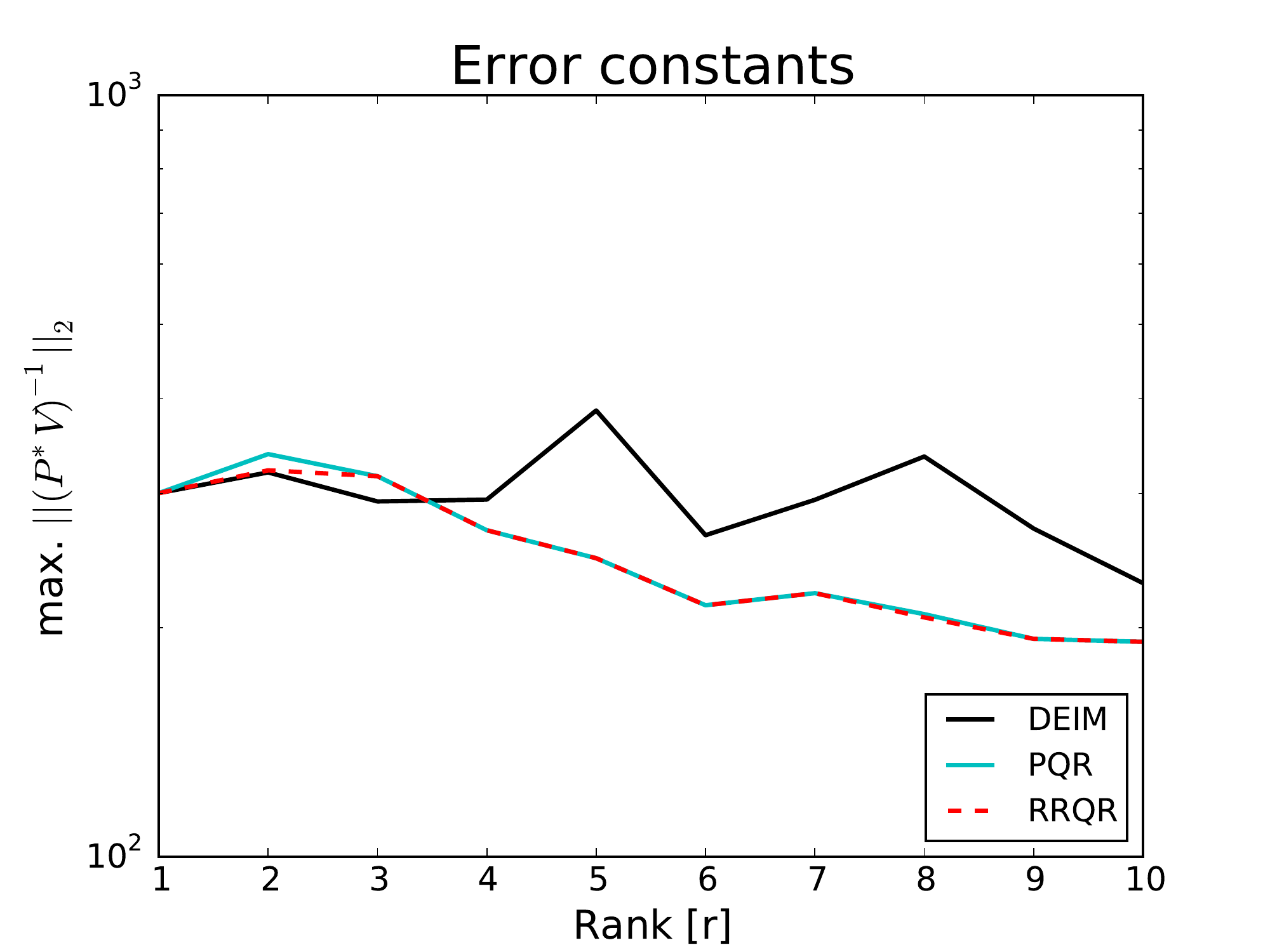}
\caption{(left) The relative error in the computation of a rank-$(r,r,r)$ approximation to tensor $\ten{X}_3$ defined in~\eqref{eqn:3d5d}, starting with the rank-$(r,r,r)$ approximation using  the HOSVD algorithm. The definitions of the algorithms used are provided at the start of Section~\ref{sec:res}. (right) The amplification factor $\max_{n=1,\dots,d}\normf{(\mat{P}_n^*\mat{V}_n)^{-1}}$ computed using DEIM, PQR and RRQR approaches are compared.}
\label{fig:deim_imaging_3d}
\end{figure}•

  \begin{figure}[!ht]
\centering
\includegraphics[scale=0.3]{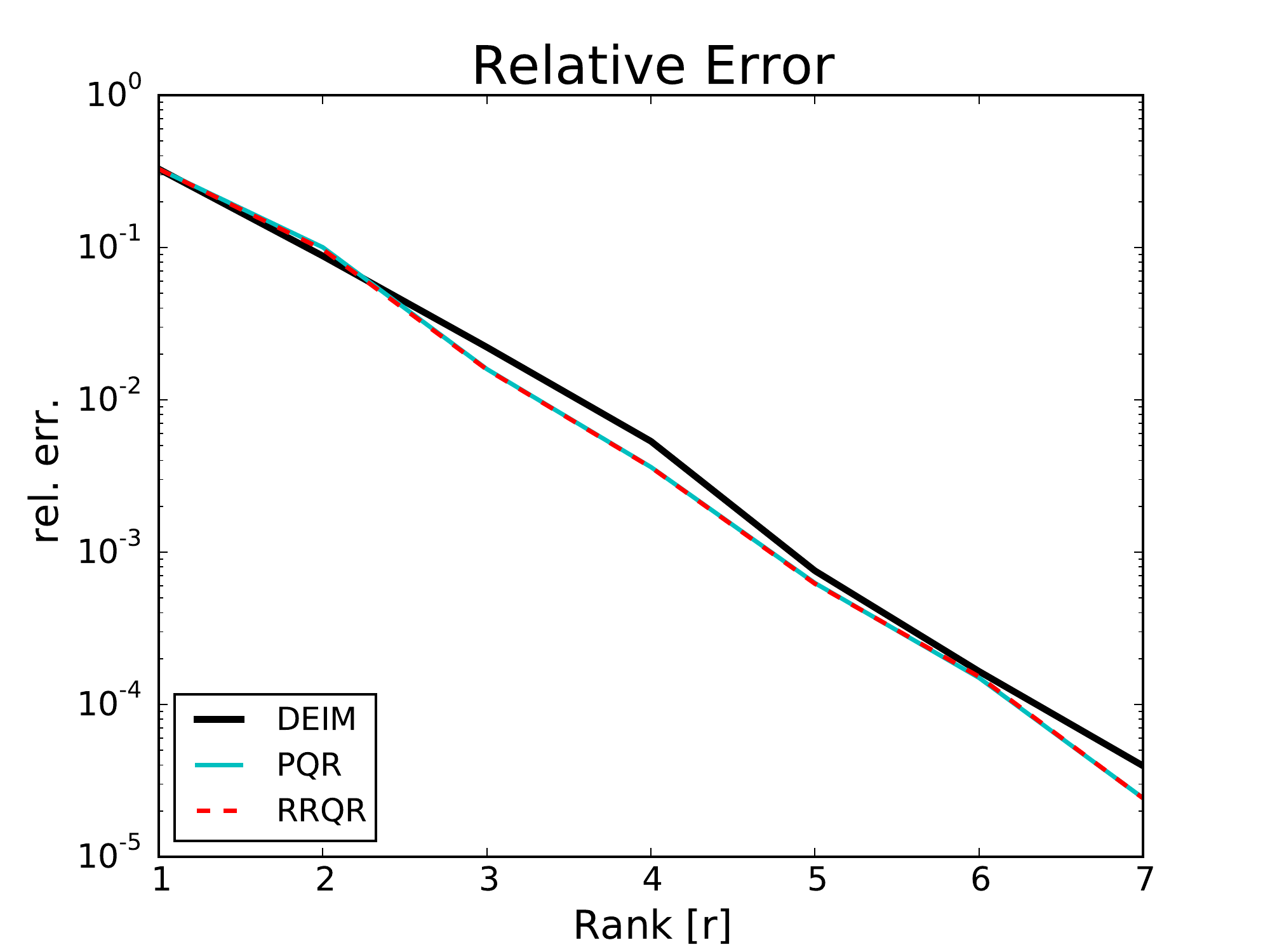}
\includegraphics[scale=0.3]{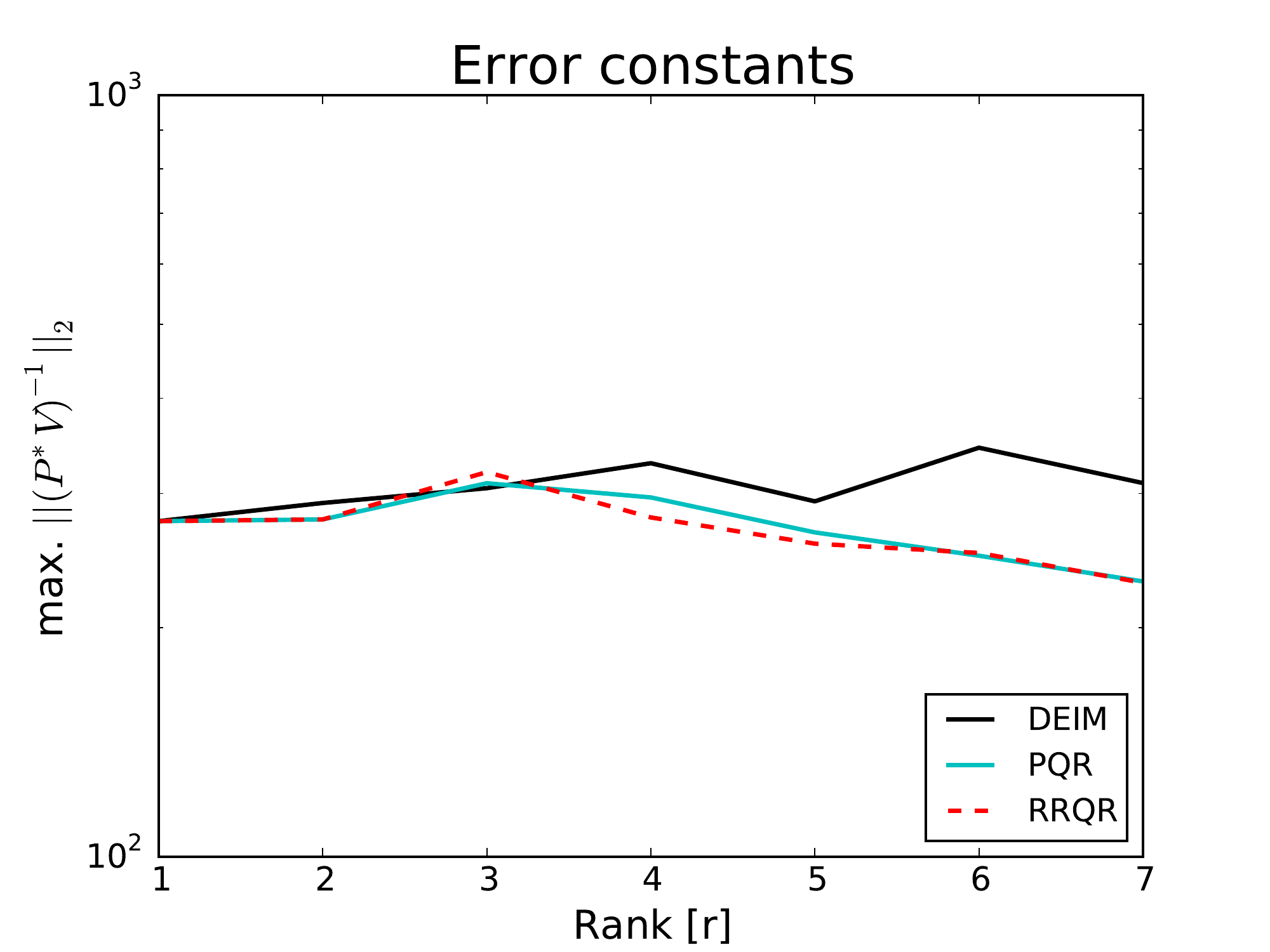}
\caption{(left) The relative error in the computation of a rank$-(r,\dots,r)$ approximation to tensor $\ten{X}_5$ defined in~\eqref{eqn:3d5d}, starting with the rank-$(r,\dots,r)$ approximation using  the HOSVD algorithm. The definitions of the algorithms used are provided at the start of Section~\ref{sec:res}. (right) The amplification factor $\max_{n=1,\dots,d}\normf{(\mat{P}_n^*\mat{V}_n)^{-1}}$ computed using DEIM, PQR and RRQR approaches are compared. }
\label{fig:deim_imaging_5d}
\end{figure}•

 \section{Conclusions}
 We presented Higher Order Interpolatory Decomposition (HOID) - a  CUR-type factorization for tensors in the Tucker format. The algorithms use the strong RRQR to generalize the matrix interpolative decomposition to tensor valued data. For the numerical results presented, the error in the proposed algorithms were comparable to those obtained from HOSVD algorithm. Furthermore, we showed how to convert approximate low-rank representations in the Tucker and CP format, into an equivalent HOID representation. An alternate truncation strategy was also proposed for the HOID decomposition, called ST-HOID. This new algorithm is cheaper to compute and produces acceptable errors, at least for the examples we explored. Numerical comparisons with other algorithms, that required approximate singular vectors, such as DEIM, Pivoted QR and leverage score sampling (Simple-Leverage) were also provided. Although the theoretical results showed that the bounds from RRQR were much lower, in practice,  RRQR, PQR and DEIM gave comparable performance and were better than the Simple-Leverage approach. We also provide an improved analysis of sampling based methods in Appendix~\ref{sec:lev}. The resulting codes have been provided on github~\url{https://github.com/arvindks/tensorcur}. Future work could also look into improving the empirical performance of the leverage score sampling approach. Many of the results developed in this paper for tensors are also applicable to matrix CUR factorizations (particularly, in the Frobenius norm) and will be discussed in an upcoming paper. Other possible extensions could be to apply the interpolatory decomposition to other factorizations such as $\mathcal{H}$-Tucker and Tensor Train formats.

\section{Acknowledgments}
The author is grateful to Ilse Ipsen for helpful discussions which helped shape this paper. She suggested the use of RRQR, which improved the error bounds in the paper significantly. Furthermore, Tania Bakhos gave valuable feedback and in particular, helped improve the  ST-HOID algorithm. The author would also like to thank Ivy Huang for her help and support throughout this process and beyond.

\bibliography{paper_tid}
\bibliographystyle{plain}

\appendix
\section{DEIM and improved DEIM}\label{sec:deim}

 Recall, in Algorithm~\ref{alg:hoid2}, Step 3 used the RRQR to generate an index set $\vec{p}_n$ which extracts appropriate columns from $\mat{X}_{(n)}$. We now describe alternative approaches for subset selection. 
In what follows, the DEIM and improved DEIM algorithms are applied to the (approximate) right singular vectors $\mat{V}$ (note that the subscripts have been dropped) from each mode unfolding.

The Discrete Empirical Interpolation method (DEIM) approach was proposed by Chaturantabut and Sorensen~\cite{chaturantabut2010nonlinear} in the context of model reduction of nonlinear dynamical systems. Sorensen and Embree~\cite{sorensen2014deim} adapted this algorithm in the context of subset selection to matrix CUR factorization.

The DEIM selection procedure is summarized in Algorithm~\ref{alg:deim}. It processes the columns of $\mat{V}$ one at a time, to produce the next index. The first index $p_1$ corresponds to the largest magnitude entry of $\vec{v}_1$, denoted by $\norm{\vec{v}_1}{\infty}$. The DEIM selection procedure  has the following error bound from the original analysis~\cite{chaturantabut2010nonlinear}
\begin{equation}\label{eqn:deim1}
\normtwo{(\mat{P}^*\mat{V})^{-1}} \quad \leq \quad \frac{(1 + \sqrt{2n})^{k-1}}{\norm{\vec{v}_1}{\infty}}\,, 
\end{equation}•
which was significantly improved upon by Sorensen and Embree~\cite{sorensen2014deim}
\[ \normtwo{(\mat{P}^*\mat{V})^{-1}} \quad \leq \quad \sqrt{\frac{nk}{3}}2^k. \]
\begin{algorithm}
\begin{algorithmic}[1]
\REQUIRE $\mat{V}$ an $n\times k$ matrix with orthonormal columns
\STATE $\vec{v} = \mat{V}(:,1)$
\STATE $[\sim,p_1] =\max{|\vec{v}|}$; set $\vec{p} = [p_1]$
\FOR {$j=2,\dots,k$}
\STATE $\vec{v} = \mat{V}(:,j)$
\STATE $\vec{c} =\mat{V}(\vec{p},1:j-1)^{-1}\vec{v}(\vec{p})$
\STATE $\vec{r} = \vec{v} - \mat{V}(:,1:j-1)\vec{c}$
\STATE $[\sim,p_j] =\max{|\vec{r}|}$; set $\vec{p} = [\vec{p}; p_j]$
\ENDFOR
\RETURN $\vec{p}$ an integer vector with distinct entries $\{ 1,\dots,n\}$
\end{algorithmic}
\caption{DEIM point selection algorithm~\cite{chaturantabut2009discrete,chaturantabut2010nonlinear}}
\label{alg:deim}
\end{algorithm}

The DEIM procedure can be interpreted as pivoted LU factorization on $\mat{V}^*$~\cite{drmac2015new}. Numerical experiments in~\cite{sorensen2014deim} suggest that although the worst case bounds permits significant growth, this bound is pessimistic for most matrices encountered in practice. Recent work by Drma\v{c} and Gugercin~\cite{drmac2015new} instead uses a pivoted QR algorithm (PQR) applied to $\mat{V}^*$ to  provide the columns from $\mat{X}$. This has been summarized in Algorithm~\ref{alg:deim2} and has the following error bound
\begin{equation}\label{eqn:deim2}
\normtwo{(\mat{P}^*\mat{V})^{-1}} \> \leq \> \sqrt{n - k + 1} \frac{\sqrt{4^k + 6n - 1}}{3}\,, 
\end{equation}•
which is clearly better than the original bound due to DEIM in~\eqref{eqn:deim1} but is comparable with the updated DEIM bound. In our work we provide bounds using the strong RRQR factorization. As mentioned earlier, strong RRQR is more expensive to compute compared to the pivoted QR (or DEIM) but provides much sharper error bounds compared to either algorithms. %Note that the bounds required in our analysis depends on the Frobenius norm; however, the bounds discussed here are based on the spectral norm.

 In practice, however the error incurred using PQR is comparable to strong RRQR (except for adversarial cases) and is comparable to DEIM but better than Simple-Leverage computation. Therefore, PQR is satisfactory in practice because of its relatively low computational costs and high accuracy. 
\begin{algorithm}
\begin{algorithmic}[1]
 \REQUIRE $\mat{V}$ an $n\times k$ matrix with orthonormal columns.
 \STATE Compute the Pivoted QR of $\mat{V}^*$ to obtain $\mat{V}^*\mat{\Pi} = \mat{QR}$.
 \STATE Extract locations of nonzero entries in first $k$ columns of  $\mat{\Pi}$ - call it $\vec{p}$.
\RETURN $\vec{p}$ an integer vector with distinct entries $\{ 1,\dots,n\}$.
\end{algorithmic}
\caption{Improved DEIM point selection algorithm~\cite{drmac2015new}}
\label{alg:deim2}
\end{algorithm}
\section{Leverage score based sampling}\label{sec:lev}
The approach taken by Drineas and Mahoney~\cite{drineas2007randomized} was to obtain the column matrices $\mat{C}_j$ for $j=1,\dots,d$ by choosing columns sampled from a distribution based on the column norms. To obtain a rank$-(r_1,\dots,r_n)$ approximation, they choose as many as $c_j \geq 4\eta^2r_i/ \beta\epsilon^2$ columns from each mode, where $0 < \delta < 1$ and $\beta \in (0,1], $ and $\eta = 1 + \sqrt{(8/\beta)\log(1/\delta)}$. With  probability of failure at most $d\delta$ the error incurred is
 \begin{equation}\label{eqn:drineas2007}\normf{\ten{E}}^2 = \normf{\ten{X} - \ten{G} \mode{1} \mat{C}_1\dots \mode{d}\mat{C}_d }^2\>  \leq \> \sum_{n=1}^d \left(\sum_{k>r_n}\sigma_{k}^2(\mat{X}_{(n)})\right) + d\epsilon \normf{\ten{X}}^2. \end{equation}
This result assumes that the columns are sampled uniformly (without replacement) from a discrete distribution that uses the column norms of the respective mode-$n$ unfoldings. This result can be improved upon by employing multiple passes through the columns~\cite{drineas2007randomized}, but we will not explore this further. Note that the result obtained here is sharper than that obtained in~\cite[Theorem 1]{drineas2007randomized} because we have used the orthogonality of the projectors in Lemma~\ref{lemma:proj}, rather than the triangle inequality.

The authors of~\cite{drineas2007randomized} also note that since the time of initial submission, significant advances have been made that obtain relative error guarantees, as opposed to additive guarantees as obtained from~\eqref{eqn:drineas2007}. These bounds can be improved upon, by using, for example, the leverage scores approach. We now reproduce a result from Mahoney et al~\cite{mahoney2009cur}. For matrices $\mat{A}$, it can be shown that applying Algorithm~\ref{alg:leverage} to the right singular vectors $\mat{V}$ of $\mat{A}$, gives us an approximation of the form
\begin{equation}\label{eqn:leverror}
\normf{\mat{A}-\mat{\Pi}_C \mat{A} } \>\leq\> (1 + \epsilon/2)\normf{\mat{A}-\mat{A}_k},
\end{equation}•
\noindent
which holds with probability at least $99\%$. Here $\mat{\Pi}_C$ is the projection operator into the span of the columns $\mat{C}$. Furthermore, $\mat{A}_k$ is the best rank-$k$ approximation to the matrix $\mat{A}$ obtained by retaining the $k$ left and right singular vectors corresponding to the top $k$ singular values. The sampling strategy used here is based on the Simple-Leverage approach described below. Better results can be obtained using a more sophisticated sampling scheme~\cite{boutsidis2014optimal} but will not be considered here. 
\begin{algorithm}
\begin{algorithmic}[1]
 \REQUIRE $\mat{V}\in \mathbb{C}^{n\times k}$ with orthonormal columns, an accuracy parameter $\epsilon > 0$. 
\STATE Compute normalized leverage scores 
\[ \pi_j = \frac{1}{k}\sum_{l=1}^k \mat{V}_{j,l}^2 . \]
\STATE Keep the $j$-th column of $\mat{A}$ with probability $p_j = \min\{1,c\pi_j\}$ for all $j \in \{1,\dots,m\}$ where $c = \bigO(k\log k/\epsilon^2)$.
\STATE Return the matrix $\mat{C}$ consisting of the selected columns of $\mat{A}$.
\end{algorithmic}
\caption{Column selection using leverage scores.}
\label{alg:leverage}
\end{algorithm}

We now extend the approach in Algorithm~\ref{alg:leverage} developed for matrices to the tensor case by following a similar strategy as~\cite{drineas2007randomized}. Algorithm~\ref{alg:leverage} is applied to each mode unfolding $\mat{X}_n$ after computing the right singular vectors $\mat{V}_n$ (or an approximation to it). Given an accuracy $\epsilon$, the accuracy of the leverage score sampling can be represented using the relative error 
\[  \normf{\ten{E}}^2 = \normf{\ten{X} - \ten{G} \mode{1} \mat{C}_1\dots \mode{d}\mat{C}_d }^2\>  \leq \> (1 + \epsilon/2)^2\sum_{n=1}^d \left(\sum_{k>r_n}\sigma_{k}^2(\mat{X}_{(n)})\right), \]
with probability of success greater than $1-d/10$. The proof is a straightforward application of the results of Lemma~\ref{lemma:proj} and the result in~\eqref{eqn:leverror}. This result is a significant improvement over the results in~\cite{drineas2007randomized}, since the bound depends only the singular values discarded and does not include the additive term $\normf{\ten{X}}$.

We now remark about the practical aspects of the algorithm. The algorithm described above requires us to sample $r_n\log(r_n)/\epsilon^2$ columns from the mode-$n$ unfolding. However, we need only a rank-$r_n$ approximation of each mode-$n$ unfolding. In practice we follow a randomized-deterministic hybrid strategy based on~\cite{boutsidis2014optimal}.  Based on the numerical results in the approach by~\cite{broadbent2010subset} we take only $\max\{4r_n,r_n\log r_n\}$ samples for each mode-$n$ unfolding, sampled from a probability distribution based on the leverage scores (see Algorithm~\ref{alg:leverage}). Subsequently, a strong RRQR is applied on the sampled columns to obtain the final $r_n$ columns. We label this approach as Simple-Leverage. 
\end{document}